\numberwithin{equation}{section}  
\theoremstyle{definition}
\theoremstyle{plain}
\newtheorem{theorem}{Theorem}[section]
\newtheorem{lemma}[theorem]{Lemma}
\newtheorem{proposition}[theorem]{Proposition}
\newtheorem{corollary}[theorem]{Corollary}
\DeclareMathOperator{\conv}{conv}
\newcommand{\simp}{\mathrm{simp}}
\newcommand{\op}{\mathrm{op}}
\newcommand{\vol}{\mathrm{vol}}
\newcommand{\filter}[1]{\langle #1 \rangle_\uparrow}
\begin{document}
\title{Simplex inequalities of order and chain polytopes of recursively defined posets}
\author{Ragnar Freij-Hollanti, Teemu Lundström}

\address{Ragnar Freij-Hollanti, Department of Mathematics and Systems Analysis, Aalto University, Espoo, Finland
}
\email{ragnar.freij@aalto.fi}
\address{Teemu Lundström, Department of Mathematics and Systems Analysis, Aalto University, Espoo, Finland
}
\email{teemu.lundstrom@aalto.fi}

\subjclass[2020]{52B05, 06A07}
\keywords{order polytope, chain polytope, partially ordered set, $f$-vector, simplex faces}

\begin{abstract}
 In this paper, we study the simplex faces of the order polytope $\mathcal{O}(P)$ and the chain polytope $\mathcal{C}(P)$ of a finite poset $P$.  We show that, if $P$ can be recursively constructed from $\mathbf{X}$-free posets using disjoint unions and ordinal sums, then $\mathcal{C}(P)$ has at least as many $k$-dimensional simplex faces as $\mathcal{O}(P)$ does, for each dimension $k$. This generalizes a previous result of Mori, both in terms of the dimensions of the simplices and in terms of the class of posets considered.
\end{abstract}
\maketitle

\section{Introduction}
The order polytope $\mathcal{O}(P)$ and chain polytope $\mathcal{C}(P)$  are two important geometric invariants associated to a finite poset $P$ on $n$ elements. They were introduced in 1986 by
Stanley~\cite{Stanley}, and share many important geometric features. In particular, they are both $0/1$-polytopes in $\mathbb{R}^P$, they both have the same dimension $\dim(\mathcal{O}(P))=\dim(\mathcal{C}(P))=|P|$, volume $\vol(\mathcal{O}(P))=\vol(\mathcal{C}(P))=\frac{1}{n!}|\{\textrm{linear extensions of } P\}|$. They also have the same number of edges, $f_1(\mathcal{O}(P))=f_1(\mathcal{C}(P))=|P|$, although this number does not have quite as nice an interpretation as $f_0$ and $\dim$~\cite{edges}.
Moreover, their toric rings are both examples
of algebras with straightening laws on distributive lattices~\cite{Hibi87,Hibi&Li2}. 

It was proven in~\cite{Hibi&Li} that $\mathcal{O}(P)$ and $\mathcal{C}(P)$ are unimodularly equivalent if and only if $P$ does not contain a copy of the poset $\mathbf{X}$ in~\Cref{fig:X} as a subposet. We say that such a poset is $\mathbf{X}$-free. It was also shown in~\cite{Hibi&Li} that the number of facets of the two polytopes satisfies $f_{n-1}(\mathcal{O}(P))\leq f_{n-1}(\mathcal{C}(P))$, and that equality holds if and only if $P$ is $\mathbf{X}$-free. In the same paper it was conjectured that the inequality $f_{k}(\mathcal{O}(P))\leq f_{k}(\mathcal{C}(P))$ holds for all values of $k=0, \dots, n-1$. 
This has later been known as the Hibi-Li conjecture.

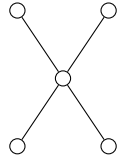
\begin{figure}[htb]
    \centering
    \begin{tikzpicture}
    [scale=0.6,
    dot/.style={draw, circle, minimum size=2mm, inner sep=0pt, fill=white},
    ]
\node(a)[dot] at (0,0){};
\node(b)[dot] at (2,0){};
\node(c)[dot] at (1,1.5){};
\node(d)[dot] at (0,3){};
\node(e)[dot] at (2,3){};

\draw (a) -- (c);
\draw (b) -- (c);
\draw (d) -- (c);
\draw (e) -- (c);
\end{tikzpicture}
    \caption{The smallest poset $\mathbf{X}$ for which $\mathcal{O}(\mathbf{X})$ and $\mathcal{C}(\mathbf{X})$ are not unimodularly equivalent.}\label{fig:X}
\end{figure}

The Hibi-Li conjecture was proven for so-called maximal ranked posets in~\cite{Ahmad-Fourier-Joswig}, and for a more general class $\mathcal{F}$ of posets by the authors in~\cite{f-vector_inequalities}. The class $\mathcal{F}$ includes both $\mathbf{X}$-free posets and series-parallel posets~\cite{Jung}, and in particular all maximal ranked posets. 

The Hibi-Li conjecture also inspired research into the enumeration of faces of given combinatorial type in the poset polytopes, and especially into combinatorial descriptions of the simplex faces in $\mathcal{O}(P)$ and $\mathcal{C}(P)$~\cite{mori_simplex_faces}. Using this characterization, Mori proved that if $P$ is maximal ranked, then $\mathcal{C}(P)$ has at least as many triangular faces as $\mathcal{O}(P)$ does, and that equality holds if and only if $P$ is $\mathbf{X}$-free~\cite{mori_maximal_ranked}. This result was later generalized by Mori together with the authors to hold for arbitrary posets $P$~\cite{two_dim_faces}.

In this paper, we generalize the first half of Mori's result to hold for the number of simplices of all dimensions, and to the same class $\mathcal{F}$ of posets as were studied in~\cite{f-vector_inequalities}, vastly generalizing maximal ranked posets. 
The proofs use very similar techniques as were used in \cite{f-vector_inequalities}. 
Characterizing posets for which equality holds between the number of simplices in any given dimension is left as a future research direction.

\section{Preliminaries}
In this section we introduce some notation and definitions used throughout the paper.
For undefined polytope terminology, see e.g. \cite{Ziegler} or \cite[Chapter 15]{handbook}.
For undefined poset terminology see e.g.\ \cite{Stanley_EC1}.
Our definitions and notations mostly follow those in our earlier paper \cite{f-vector_inequalities}.

All posets in this paper are assumed to be finite.
A non-empty poset $P$ is said to be \emph{connected} if for all $x,y \in P$ there exists $p_0,\dots,p_k \in P$ such that  $x = p_0 \perp x_1 \perp \cdots \perp p_k = y$
where $p_i \perp p_j$ means that $p_i$  and $p_j$ are comparable.
Given an element $p \in P$ we let $\filter{p} \coloneqq \{ q \in P \colon q \ge p \}$ be the principal filter generated by $p$.
Given posets $A$ and $B$, their disjoint union is denoted by $A \sqcup B$ and their ordinal sum is denoted by $A < B$.
The opposite or dual poset of $P$ is denoted by $P^\op$.

In any polytope, we count the empty set and the full polytope to be faces where the empty set has dimension $-1$.
We also consider the empty set to be a simplex.
The \emph{join} of polytopes $\mathcal{P}$ and $\mathcal{Q}$ is denoted by $\mathcal{P} * \mathcal{Q}$ and we consider this construction only up to combinatorial isomorphism.

Let $\mathcal{P}$ be a polytope.
We let $f_k(\mathcal{P})$ denote the number of $k$-faces of $\mathcal{P}$ and $s_k(\mathcal{P})$ denote the number of $k$-simplices in $\mathcal{P}$, by which we mean $k$-dimensional simplex faces of $\mathcal{P}$.
We let $L(\mathcal{P})$ denote the face lattice of $\mathcal{P}$ and we let $L_\simp(\mathcal{P})$ denote the set of simplex faces of $\mathcal{P}$.
Analogously to the $f$-polynomial, we define 
\begin{equation*}
    S_\mathcal{P}(x) \coloneqq \sum_{F \in L_\simp(\mathcal{P})} x^{\dim(F) + 1} = \sum_{i=-1}^{\dim(\mathcal{P})} s_i(\mathcal{P}) x^{i+1} \in \mathbb{Z}_{\ge 0}[x].
\end{equation*}

In this paper we study ordinal sums of posets which correspond to subdirect sums of order and chain polytopes (\Cref{ordinal_sum}).
In these constructions the faces come in two types: those that contain the origin and those that do not (\Cref{faces_of_PvQ}).
We therefore introduce the following additional notation.

Let $\mathcal{P}$ be a polytope containing the origin as a vertex.
We let  $s_k^0(\mathcal{P})$ denote the number of $k$-simplices in $\mathcal{P}$ that contain the origin and $s_k^1(\mathcal{P})$ the number of $k$-simplices in $\mathcal{P}$ that do not contain the origin.
We let $L^0(\mathcal{P})$ and $L^1(\mathcal{P})$ denote the sets of faces containing and not containing the origin respectively.
The restrictions of these to simplex faces are denoted by $L_\simp^0(\mathcal{P})$ and $L_\simp^1(\mathcal{P})$.
We also define the polynomials
\begin{align*}
    S_\mathcal{P}^0(x) &\coloneqq \sum_{F \in L^0_\simp(\mathcal{P})} x^{\dim(F) + 1} = \sum_{i=0}^{\dim(\mathcal{P})} s_i^0(\mathcal{P})x^{i+1}  \in \mathbb{Z}_{\ge 0}[x] \\
    S_\mathcal{P}^1(x) &\coloneqq \sum_{F \in L^1_\simp(\mathcal{P})} x^{\dim(F) + 1} = \sum_{i=-1}^{\dim(\mathcal{P})} s_i^1(\mathcal{P})x^{i+1} \in \mathbb{Z}_{\ge 0}[x].
\end{align*}
Thus $S_\mathcal{P}(x) = S_\mathcal{P}^0(x) + S_\mathcal{P}^1(x)$.

Given two polynomials $f(x),g(x) \in \mathbb{Z}_{\ge 0}[x]$ we write $f(x) \le g(x)$ if the inequality holds coefficient-wise.
Hence, for polytopes $\mathcal{P}$ and $\mathcal{Q}$ the inequality $S_\mathcal{P}(x) \le S_\mathcal{Q}(x)$ holds if and only if $s_k(\mathcal{P}) \le s_k(\mathcal{Q})$ for all $k$.

Next, we quickly give the definitions and basic properties of order and chain polytopes.
For more on these polytopes, see for example Stanley's original paper \cite{Stanley}.

Let $P$ be a poset.
The \emph{order polytope} of $P$ is the polytope
\begin{equation*}
    \mathcal{O}(P) =
    \left\{ x \in \mathbb{R}^P \;\middle|\;
    \begin{gathered} 
        0 \leq x_p \leq 1 \mbox{ for all } p \in P, \\
        x_p \leq x_q \mbox{ if } p \leq q \mbox{ in }P
    \end{gathered}
    \right\}
    \subseteq \mathbb{R}^P.
\end{equation*}
The \emph{chain polytope} of $P$ is the polytope
\begin{equation*}
    \mathcal{C}(P) =
    \left\{ x \in \mathbb{R}^P \;\middle|\;
    \begin{gathered} 
        x_p \geq 0 \mbox{ for all } p \in P,\\
        x_{p_1}+\cdots +x_{p_k} \leq 1 \mbox{ if } p_1 < \cdots < p_k \mbox{ in } P
    \end{gathered}
    \right\}
    \subseteq \mathbb{R}^P.
\end{equation*}
Given a subset $S \subseteq P$ we let $\chi_S \in \mathbb{R}^P$ denote the characteristic vector of $S$.
The vertices of $\mathcal{O}(P)$ are given by $\chi_F$ for all filters $F \subseteq P$, and the vertices of $\mathcal{C}(P)$ are given by $\chi_A$ for all antichains $A \subseteq P$.
The dimension of $\mathcal{O}(P)$ and $\mathcal{C}(P)$ is given by 
\begin{equation*}
    \dim(\mathcal{O}(P)) = \dim(\mathcal{C}(P)) = |P|.
\end{equation*}

In the literature of order polytopes (e.g.\ \cite{cutting,Hibi&Li,mori_simplex_faces}) the definition of $\mathcal{O}(P)$ is sometimes written by requiring that $x_p \le x_q$ for all $p \ge q$ in $P$.
The vertices of the resulting polytope then correspond to order ideals instead of filters.
The polytope defined that way is isomorphic to the polytope $\mathcal{O}(P)$ defined here and one can move bijectively between order ideals and filters by taking complements.

Notice the following edge case.
If $P=\emptyset$ is the empty poset then $\mathcal{O}(P)$ and $\mathcal{C}(P)$ are points, namely the origin in $\mathbb{R}^\emptyset$.
Hence $S^0_{\mathcal{O}(\emptyset)}(x) =S^0_{\mathcal{C}(\emptyset)}(x )= x$ and $S_{\mathcal{O}(\emptyset)}^1(x) = S_{\mathcal{C}(\emptyset)}^1(x) = 0$.

Given polytopes $\mathcal{P} \subseteq \mathbb{R}^m$ and $\mathcal{Q} \subseteq \mathbb{R}^n$ both containing the origin as a vertex, we define their \emph{subdirect sum} as
\begin{equation*}
    \mathcal{P} \vee \mathcal{Q} \coloneqq \conv(\mathcal{P} \times \{ 0 \}^n \cup \{ 0 \}^m \times \mathcal{Q}) \subseteq \mathbb{R}^{m+n}.
\end{equation*}
This is a special case of a construction studied by McMullen in \cite{McMullen} from where we took the name ''subdirect sum''.
The subdirect sums of McMullen interpolates between our notion of subdirect sums and the notion of direct (or free) sums, in which the origin is assumed to be an interior point in both polytopes, rather than a vertex.
For more on these constructions and their terminology, see for example \cite[Section 15.1.3]{handbook}.

The following gives a complete description of the faces of $\mathcal{P} \vee \mathcal{Q}$.
This follows from the more general description of the faces of $\mathcal{P} \vee \mathcal{Q}$ which is briefly mentioned in \cite{McMullen}.
\begin{proposition}[{\cite[Proposition 3.7]{f-vector_inequalities}}]\label{faces_of_PvQ}
    Let $\mathcal{P}$ and $\mathcal{Q}$ be polytopes both containing the origin as a vertex.
    The faces of $\mathcal{P} \vee \mathcal{Q}$ are 
    \begin{enumerate}[(1)]
        \item $F \vee G$ for all faces $F \in L^0(\mathcal{P})$ and $G \in L^0(\mathcal{Q})$, and 
        \item $F * G$ for all faces $F \in L^1(\mathcal{P})$ and $G \in L^1(\mathcal{Q})$.
    \end{enumerate}
\end{proposition}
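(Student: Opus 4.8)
The plan is to realise every face of $\mathcal{P} \vee \mathcal{Q}$ as the maximiser set of a linear functional and to read off its structure directly. First I would record the parametrisation
\begin{equation*}
    \mathcal{P} \vee \mathcal{Q} = \{ (\lambda p,\, (1-\lambda) q) : p \in \mathcal{P},\ q \in \mathcal{Q},\ \lambda \in [0,1] \},
\end{equation*}
obtained by collecting the weights in a convex combination of points of $\mathcal{P} \times \{ 0 \}^n$ and $\{ 0 \}^m \times \mathcal{Q}$. A functional on $\mathbb{R}^{m+n}$ splits as $\ell = (\ell_1, \ell_2)$, and on the point above it evaluates to $\lambda \ell_1(p) + (1-\lambda)\ell_2(q)$. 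Writing $M_{\mathcal{P}} = \max_{\mathcal{P}} \ell_1$ and $M_{\mathcal{Q}} = \max_{\mathcal{Q}} \ell_2$, the maximum of $\ell$ over $\mathcal{P} \vee \mathcal{Q}$ equals $\max(M_{\mathcal{P}}, M_{\mathcal{Q}})$, since the inner optimum is linear in $\lambda$ and hence attained at an endpoint. The single observation that powers the whole argument is that, because the origin is a vertex (hence a point) of both polytopes, $M_{\mathcal{P}} \ge \ell_1(0) = 0$ and likewise $M_{\mathcal{Q}} \ge 0$.

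Next I would run the case analysis on $(M_{\mathcal{P}}, M_{\mathcal{Q}})$, letting $F_{\mathcal{P}}$ and $F_{\mathcal{Q}}$ denote the faces of $\mathcal{P}$ and $\mathcal{Q}$ maximising $\ell_1$ and $\ell_2$. If $M_{\mathcal{P}} > M_{\mathcal{Q}}$, attaining the maximum forces $\lambda = 1$, so the face is $F_{\mathcal{P}} \times \{ 0 \}^n$; as $M_{\mathcal{P}} > M_{\mathcal{Q}} \ge 0$ we get $0 \notin F_{\mathcal{P}}$, i.e.\ $F_{\mathcal{P}} \in L^1(\mathcal{P})$, and this face is $F_{\mathcal{P}} * \emptyset$, of type (2); the case $M_{\mathcal{Q}} > M_{\mathcal{P}}$ is symmetric. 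When $M_{\mathcal{P}} = M_{\mathcal{Q}} =: M$, a short estimate shows the maximisers are exactly the points $(\lambda p, (1-\lambda) q)$ with $p \in F_{\mathcal{P}}$, $q \in F_{\mathcal{Q}}$, $\lambda \in [0,1]$, that is $\conv((F_{\mathcal{P}} \times \{ 0 \}^n) \cup (\{ 0 \}^m \times F_{\mathcal{Q}}))$. Here the sign of $M$ decides the type: if $M = 0$ both $F_{\mathcal{P}}$ and $F_{\mathcal{Q}}$ contain the origin and this convex hull is by definition the subdirect sum $F_{\mathcal{P}} \vee F_{\mathcal{Q}}$, of type (1); if $M > 0$ then neither maximising face contains the origin, so the two embedded copies lie in skew affine subspaces and the convex hull is the join $F_{\mathcal{P}} * F_{\mathcal{Q}}$, of type (2). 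Since every nonempty face is the maximiser set of some $\ell$ (and the empty face is $\emptyset * \emptyset$), this shows every face has one of the two listed forms.

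For the converse I would exhibit, for each listed pair, a functional realising it. Given $F \in L^0(\mathcal{P})$ and $G \in L^0(\mathcal{Q})$, pick supporting functionals $\ell_1, \ell_2$ with maximiser sets $F, G$; because $0 \in F$ and $0 \in G$ their maxima are automatically $0$, so $\ell = (\ell_1, \ell_2)$ lands in the case $M = 0$ and cuts out $F \vee G$. Given nonempty $F \in L^1(\mathcal{P})$ and $G \in L^1(\mathcal{Q})$, pick supporting functionals with maxima $M_{\mathcal{P}}, M_{\mathcal{Q}} > 0$ (positive precisely because the faces miss the origin) and rescale so that $M_{\mathcal{P}} = M_{\mathcal{Q}}$, landing in the case $M > 0$ and cutting out $F * G$; the degenerate subcases where $F$ or $G$ is empty are handled by $\ell = (0, \ell_2)$ or $\ell = (\ell_1, 0)$.

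The main obstacle, I expect, is the bookkeeping around the origin in the equal-maxima case: one must verify cleanly that $M = 0$ is equivalent to both optimal faces containing the origin, and that $M > 0$ forces the two embedded pieces into skew (complementary) affine subspaces, so that the convex hull is genuinely a join rather than merely a subdirect sum. Making the join/subdirect-sum dichotomy rest entirely on the sign of $M$, which in turn rests on the origin being a vertex, is the crux; the remaining verifications are routine.
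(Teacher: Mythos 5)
Your proof is correct, but note that the paper itself gives no proof of this proposition: it is imported from the authors' earlier work \cite{f-vector_inequalities}, with the remark that it follows from McMullen's more general face description of subdirect sums \cite{McMullen}. So your supporting-functional argument serves as a self-contained substitute rather than a variant of anything proved in this paper. The structure is sound: the parametrisation $(\lambda p,(1-\lambda)q)$ of $\mathcal{P}\vee\mathcal{Q}$, the identity $\max_{\mathcal{P}\vee\mathcal{Q}}\ell=\max(M_\mathcal{P},M_\mathcal{Q})$, the trichotomy on $(M_\mathcal{P},M_\mathcal{Q})$, and the converse construction realising each listed set as a face (including the degenerate cases via $\ell=(\ell_1,0)$ and $\ell=(0,\ell_2)$, and $\emptyset=\emptyset*\emptyset$) are all correct. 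The one step you flag as the crux but leave as ``to be verified cleanly'' does close, and within your own framework it is a two-line check: if $M>0$ then $\mathrm{aff}(F_\mathcal{P})\subseteq\{x:\ell_1(x)=M\}$, a hyperplane avoiding the origin, and similarly for $F_\mathcal{Q}$; writing $U,V$ for the direction spaces of the two embedded faces in $\mathbb{R}^{m+n}$ and $a_0=(p_0,0)$, $b_0=(0,q_0)$ for base points, the difference $b_0-a_0=(-p_0,q_0)$ lies in $U+V$ only if $0\in\mathrm{aff}(F_\mathcal{P})$ and $0\in\mathrm{aff}(F_\mathcal{Q})$, so here $\dim\mathrm{aff}\bigl(F_\mathcal{P}\times\{0\}^n\cup\{0\}^m\times F_\mathcal{Q}\bigr)=\dim F_\mathcal{P}+\dim F_\mathcal{Q}+1$, which is exactly the condition for the convex hull of two polytopes in complementary coordinate subspaces to be their join. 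Conversely, when $M=0$ the origin attains both maxima, so $0\in F_\mathcal{P}$ and $0\in F_\mathcal{Q}$, and the maximiser set is $F_\mathcal{P}\vee F_\mathcal{Q}$ literally by the definition of subdirect sum; thus your dichotomy on the sign of $M$ is airtight.
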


The ordinal sum of posets relates to subdirect sum in the following way.
This was essentially also noticed in \cite[Lemma 7.2--7.3]{Levelness}.
\begin{proposition}[{\cite[Proposition 4.1--4.2]{f-vector_inequalities}}]\label{ordinal_sum}
    Let $A$ and $B$ be posets.
    Then
    \begin{equation*}
        \mathcal{O}(A<B) \cong \mathcal{O}(A) \vee \mathcal{O}(B^\op)
    \end{equation*}
    and 
    \begin{equation*}
        \mathcal{C}(A<B) = \mathcal{C}(A) \vee \mathcal{C}(B).
    \end{equation*}
\end{proposition}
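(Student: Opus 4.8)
The plan is to prove both identities by comparing vertex sets, after identifying $\mathbb{R}^{A \sqcup B}$ with $\mathbb{R}^A \times \mathbb{R}^B$; for the order polytope this comparison will be preceded by a unimodular reflection, which is the reason only a combinatorial isomorphism (and a switch from $B$ to $B^\op$) is asserted there. The common starting point is a combinatorial description of the relevant subsets of the ordinal sum $A<B$. Since every element of $A$ lies below every element of $B$, an antichain of $A<B$ cannot meet both $A$ and $B$, so the antichains of $A<B$ are exactly the antichains of $A$ together with the antichains of $B$, sharing only the empty antichain. Dually, if a filter (up-set) $F$ of $A<B$ satisfies $F \cap A \ne \emptyset$, then $F$ must contain all of $B$; hence the filters of $A<B$ are exactly the filters of $B$ (those with empty $A$-part) together with the sets $F_A \cup B$ for nonempty filters $F_A$ of $A$.

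For the chain polytope this description closes the argument at once. The vertices of $\mathcal{C}(A<B)$ are the vectors $\chi_S$ for antichains $S$ of $A<B$; by the above, such an $S$ is either an antichain of $A$, whose characteristic vector lies in $\mathcal{C}(A) \times \{0\}$, or an antichain of $B$, whose characteristic vector lies in $\{0\} \times \mathcal{C}(B)$. Conversely, every vertex of $\mathcal{C}(A) \times \{0\}$ and of $\{0\} \times \mathcal{C}(B)$ arises in this way. Thus $\mathcal{C}(A<B)$ and $\mathcal{C}(A) \vee \mathcal{C}(B) = \conv(\mathcal{C}(A) \times \{0\} \cup \{0\} \times \mathcal{C}(B))$ are the convex hulls of the same finite point set, so they are literally equal, giving the stated equality.

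For the order polytope I would apply the unimodular map $\phi \colon \mathbb{R}^A \times \mathbb{R}^B \to \mathbb{R}^A \times \mathbb{R}^B$ that fixes the $A$-coordinates and sends $x_b \mapsto 1 - x_b$ on each $B$-coordinate. As an affine isomorphism, $\phi$ carries $\mathcal{O}(A<B)$ onto the convex hull of the images of its vertices, so it suffices to track these. A filter of the form $F_A \cup B$ has characteristic vector $(\chi_{F_A}, \mathbf{1})$, which $\phi$ sends to $(\chi_{F_A}, 0)$; a filter $F_B$ of $B$ has vector $(0, \chi_{F_B})$, which $\phi$ sends to $(0, \chi_{B \setminus F_B})$. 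Since the complements of filters of $B$ are exactly the filters of $B^\op$, and the origin is contributed by both families, the image vertex set is $\{(\chi_{F_A}, 0) : F_A \text{ a filter of } A\} \cup \{(0, \chi_G) : G \text{ a filter of } B^\op\}$. This is precisely the vertex set of $\mathcal{O}(A) \vee \mathcal{O}(B^\op)$, whence $\phi(\mathcal{O}(A<B)) = \mathcal{O}(A) \vee \mathcal{O}(B^\op)$ and the asserted isomorphism follows.

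I expect the only genuine subtlety to sit in the order-polytope case, namely verifying that $\phi$ is exactly the transformation that converts the ordinal-sum structure into a subdirect sum. Concretely, one should check that $\phi$ simultaneously turns the constraints $x_b \le x_{b'}$ for $b \le_B b'$ into the defining constraints of $\mathcal{O}(B^\op)$, sends the all-ones $B$-block of the ``large'' filters to the origin, and converts the cross relations $x_a \le x_b$ into the coupling $x_a + (1-x_b) \le 1$ that glues the two summands at the origin. If one prefers to argue through inequality descriptions rather than vertices, this is where the bookkeeping concentrates; the vertex comparison above sidesteps most of it. The degenerate cases $A = \emptyset$ or $B = \emptyset$ should be checked separately but follow immediately from the conventions fixed for the empty poset.
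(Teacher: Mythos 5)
Your proof is correct. The paper itself does not prove this proposition but imports it from the authors' earlier work (cited as Propositions 4.1--4.2 there), and your argument is essentially the standard one used in that reference: identify the antichains and filters of $A<B$, compare vertex sets directly for the chain polytope, and apply the unimodular reflection $x_b \mapsto 1-x_b$ on the $B$-coordinates for the order polytope, which is exactly what forces the passage to $B^\op$ and yields equality of the two convex hulls (in fact unimodular equivalence, stronger than the stated combinatorial isomorphism).
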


Hibi and Li gave the following characterization of edges for both polytopes.
Here and throughout this paper $\Delta$ denotes the symmetric difference of sets.
\begin{theorem}[\cite{cutting}]\label{edges}
    Let $P$ be a poset.
    \begin{enumerate}[(1)]
        \item The edges of $\mathcal{O}(P)$ are exactly the sets $\conv(\chi_{F_1},\chi_{F_2})$ where $F_1$ and $F_2$ are filters such that $F_1 \subseteq F_2$ and $F_2 \setminus F_1$ is connected.
        \item The edges of $\mathcal{C}(P)$ are exactly the sets $\conv(\chi_{A_1},\chi_{A_2})$ where $A_1$ and $A_2$ are antichains such that $A_1 \Delta A_2$ is connected.
    \end{enumerate}
\end{theorem}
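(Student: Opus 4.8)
The plan is to use the standard criterion that, for a polytope all of whose vertices are $0/1$-vectors, the segment $\conv(v,w)$ between two vertices is an edge if and only if it is a face, which (since the midpoint lies in its relative interior) happens precisely when the midpoint $\tfrac{1}{2}(v+w)$ cannot be written as a convex combination of vertices other than $v$ and $w$. Equivalently, I would look for a decomposition $v+w=v'+w'$ into a different pair of vertices $\{v',w'\}\ne\{v,w\}$. Both parts then reduce to analysing such decompositions coordinatewise, exploiting that every coordinate where a $0/1$-vertex takes value $0$ or $1$ forces the corresponding coordinate of all vertices appearing with positive weight.

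For the order polytope I would first rule out incomparable filters: if $F_1,F_2$ are filters with neither containing the other, then $F_1\cap F_2$ and $F_1\cup F_2$ are again filters and $\chi_{F_1}+\chi_{F_2}=\chi_{F_1\cap F_2}+\chi_{F_1\cup F_2}$ is a decomposition into a different pair, so the segment is not an edge. Hence I may assume $F_1\subseteq F_2$ and set $D=F_2\setminus F_1$. If $D$ disconnects as $D_1\sqcup D_2$ with no comparabilities between the parts, then $F_1\cup D_1$ and $F_1\cup D_2$ are filters (anything above a point of $D_i$ lies in $F_2$, and if it lies in $D$ it stays in the same part), and these again split $\chi_{F_1}+\chi_{F_2}$, so the segment is not an edge. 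Conversely, if $D$ is connected, I take an interior point $x=t\chi_{F_1}+(1-t)\chi_{F_2}$ and suppose $x=\sum_i\lambda_i\chi_{G_i}$ with distinct filters $G_i$ and $\lambda_i>0$. Reading off the coordinates in $F_1$ (value $1$) and outside $F_2$ (value $0$) forces $F_1\subseteq G_i\subseteq F_2$, so $G_i=F_1\cup D_i'$ with $D_i'$ a filter of the subposet $D$; the coordinates on $D$ (all equal to $1-t$) then say that $p\mapsto\sum_{i:\,p\in D_i'}\lambda_i$ is constant on $D$, while up-closedness of each $D_i'$ makes it nondecreasing along comparabilities. Connectedness of $D$ forces each $D_i'$ to be $\emptyset$ or all of $D$, i.e.\ each $G_i\in\{F_1,F_2\}$, so the segment is an edge.

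For the chain polytope I would argue analogously, but the decomposition is more delicate. Writing $C=A_1\cap A_2$, $P_1=A_1\setminus A_2$, $P_2=A_2\setminus A_1$ and $S=A_1\Delta A_2=P_1\cup P_2$, the key structural observation is that $P_1$ and $P_2$ are antichains (being subsets of $A_1,A_2$) and are incomparable to $C$, so all comparabilities inside $S$ run between $P_1$ and $P_2$; thus the comparability graph of $S$ is bipartite with parts $P_1,P_2$, and ``$S$ connected'' means this bipartite graph is connected. For necessity, if $S=S_1\sqcup S_2$ is disconnected I would swap the halves, setting $B_1=C\cup(A_1\cap S_1)\cup(A_2\cap S_2)$ and $B_2=C\cup(A_2\cap S_1)\cup(A_1\cap S_2)$; these are antichains (the only pairs not already lying jointly in $A_1$ or in $A_2$ run between $S_1$ and $S_2$, where there are no comparabilities) and satisfy $\chi_{B_1}+\chi_{B_2}=\chi_{A_1}+\chi_{A_2}$ with $\{B_1,B_2\}\ne\{A_1,A_2\}$, so the segment is not an edge. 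For sufficiency, with $S$ connected I again take $x=t\chi_{A_1}+(1-t)\chi_{A_2}=\sum_i\lambda_i\chi_{B_i}$; the coordinatewise reading forces $B_i=C\cup T_i$ with $T_i\subseteq S$ an antichain. For each comparability $p<q$ with $p\in P_1,q\in P_2$, the antichain $T_i$ contains at most one of $p,q$, so the index sets $\{i:p\in T_i\}$ and $\{i:q\in T_i\}$ are disjoint; since their $\lambda$-weights are $t$ and $1-t$, summing to $1$, they must partition all indices, i.e.\ every $T_i$ contains exactly one endpoint of each such comparability. Such a set is a proper $2$-colouring of the connected bipartite comparability graph of $S$, of which there are exactly two, namely $T_i=P_1$ and $T_i=P_2$; hence each $B_i\in\{A_1,A_2\}$ and the segment is an edge.

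The main obstacle is the sufficiency direction for the chain polytope: unlike the order polytope, where a single monotonicity-plus-constancy argument pins down the intermediate vertices, here one must first isolate the bipartite antichain structure of $S=A_1\Delta A_2$ and then recognise that the antichain constraint, combined with the weight bookkeeping $t+(1-t)=1$, forces each contributing vertex to realise one of only two proper $2$-colourings. The degenerate case $|S|=1$ (no comparabilities, so $S$ connected by default) should be noted separately, where the conclusion is immediate since $T_i\subseteq S$ then leaves only the two choices $\emptyset$ and $S$.
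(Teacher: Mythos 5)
The paper does not actually prove this statement: Theorem~\ref{edges} is imported from \cite{cutting} as a known result, so there is no internal proof to compare yours against. Judged on its own, your proof is correct and self-contained. You use the midpoint criterion consistently: to certify non-edges you exhibit explicit alternative pair decompositions ($\chi_{F_1}+\chi_{F_2}=\chi_{F_1\cap F_2}+\chi_{F_1\cup F_2}$ for incomparable filters, $F_1\cup D_1$ and $F_1\cup D_2$ across a disconnection of $F_2\setminus F_1$, and the swapped antichains $B_1,B_2$ across a disconnection of $A_1\Delta A_2$), and to certify edges you correctly analyse \emph{arbitrary} convex combinations of vertices rather than just pairs. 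This last point matters: your parenthetical claim that non-adjacency is \emph{equivalent} to the existence of a decomposition into a different pair of vertices is never justified --- but also never used, so it is harmless. The two pinning arguments are sound: in $\mathcal{O}(P)$, up-closedness of each $G_i\cap(F_2\setminus F_1)$ together with constancy of the weight function $p\mapsto\sum_{i:\,p\in G_i}\lambda_i$ on the connected set $F_2\setminus F_1$ forces $G_i\in\{F_1,F_2\}$; in $\mathcal{C}(P)$, the antichain condition makes each $T_i$ a proper $2$-colouring of the connected bipartite comparability graph of $A_1\Delta A_2$, of which there are exactly two when $|A_1\Delta A_2|\ge 2$, and you treat the case $|A_1\Delta A_2|=1$ separately as needed. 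Two small checks are left implicit and deserve a line each: that $\{B_1,B_2\}\neq\{A_1,A_2\}$ (true because each nonempty $S_j$ is partitioned into the disjoint sets $A_1\cap S_j$ and $A_2\cap S_j$, so $B_1$ differs from $A_1$ on $S_2$ and from $A_2$ on $S_1$), and that when $|A_1\Delta A_2|\ge2$ connectivity forces both parts $P_1,P_2$ to be nonempty, which is what makes the bipartition, and hence the pair of proper $2$-colourings, unique.
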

This characterization has been recently generalized to all simplex faces by Mori. 

\begin{theorem}[\cite{mori_simplex_faces}]\label{mori_theorem}
    Let $P$ be a poset.
    Let $F_1, \dots, F_k$ be distinct filters and let $A_1, \dots, A_k$ be distinct antichains.
    \begin{enumerate}[(1)]
        \item $\conv(\chi_{F_1},\dots,\chi_{F_k})$ is a $(k-1)$-simplex face of $\mathcal{O}(P)$ if and only if $\conv(\chi_{F_i},\chi_{F_j})$ is an edge for all $i \neq j$.
        \item $\conv(\chi_{A_1},\dots,\chi_{A_k})$ is a $(k-1)$-simplex face of $\mathcal{C}(P)$ if and only if $\conv(\chi_{A_i},\chi_{A_j})$ is an edge for all $i \neq j$.
    \end{enumerate}
\end{theorem}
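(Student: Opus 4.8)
The plan is to prove the two statements in parallel, since the two directions have the same logical shape for $\mathcal{O}(P)$ and $\mathcal{C}(P)$. Write $\chi_{V_1},\dots,\chi_{V_k}$ for the vertices under consideration (filters in the order case, antichains in the chain case) and set $\Sigma=\conv(\chi_{V_1},\dots,\chi_{V_k})$. The forward direction is immediate and uniform: if $\Sigma$ is a $(k-1)$-simplex face, then for each $i\neq j$ the segment $\conv(\chi_{V_i},\chi_{V_j})$ is a $1$-dimensional face of the simplex $\Sigma$, and since the face relation is transitive a face of a face is again a face of the ambient polytope. Hence each $\conv(\chi_{V_i},\chi_{V_j})$ is an edge, and no structural input beyond transitivity of faces is needed.

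For the converse I would use a minimal-face reduction. Let $x=\frac1k\sum_i\chi_{V_i}$ be the barycenter, which lies in the relative interior of $\Sigma$, and let $\mathcal{F}(x)$ be the smallest face of the ambient polytope containing $x$, i.e.\ the face cut out by the constraints tight at $x$. Since $\Sigma\subseteq\mathcal{F}(x)$ and each $\chi_{V_i}$ is a vertex of the ambient polytope, the $\chi_{V_i}$ are among the vertices of $\mathcal{F}(x)$; it therefore suffices to establish (A) that the $\chi_{V_i}$ are affinely independent, so that $\Sigma$ is genuinely a $(k-1)$-simplex, and (B) that $\mathcal{F}(x)$ has no vertices other than $\chi_{V_1},\dots,\chi_{V_k}$, so that $\mathcal{F}(x)=\conv(\chi_{V_i})=\Sigma$ is a face.

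For $\mathcal{O}(P)$ both (A) and (B) can be carried out explicitly. By \Cref{edges}(1) the pairwise-edge hypothesis forces the filters to be nested, so after relabelling $F_1\subsetneq\dots\subsetneq F_k$ with every $F_j\setminus F_i$ connected. Writing $L_m=F_m\setminus F_{m-1}$ for the layers, the vectors $\chi_{F_m}-\chi_{F_{m-1}}$ have pairwise disjoint nonempty supports, giving (A). For (B), the constraints tight at $x$ are $x_p=0$ on $P\setminus F_k$, $x_p=1$ on $F_1$, and $x_p=x_q$ for comparable $p,q$ lying in a common layer; since each layer is connected these equalities force any vertex $\chi_F$ of $\mathcal{F}(x)$ to be constant on each layer, so $\chi_F$ corresponds to an up-set of the induced ``layer poset'' $Q$ on $\{L_2,\dots,L_k\}$. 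A short transitivity argument then shows that connectedness of every interval $F_j\setminus F_i$ forces $Q$ to be a chain, and a poset on $k-1$ elements has exactly $k$ up-sets precisely when it is a chain; these $k$ up-sets are the $F_j$, which gives (B).

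For $\mathcal{C}(P)$ the same reduction applies, but (B) is the crux and the step I expect to be hardest. Here the constraints tight at $x$ are $x_p=0$ off $U=\bigcup_i A_i$ together with $\sum_{p\in C}x_p=1$ for chains $C$; expanding the barycenter shows that the tight chains are exactly those meeting every $A_i$ in a single element, and the vertices of $\mathcal{F}(x)$ are precisely the antichains $A\subseteq U$ meeting each such chain exactly once. Each $A_i$ is visibly of this form, and the content of (B) is that there are no others. For $k=2$ this is a parity argument: the requirement that exactly one endpoint of each comparable cross-pair lie in $A$ propagates along the connected poset $A_1\,\Delta\,A_2$ and pins $A$ down to $A_1$ or $A_2$. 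I expect the general case to follow the same mechanism, with an extra antichain $A$ witnessing a disconnection of some $A_i\,\Delta\,A_j$; but converting this into a clean exchange or induction argument on $|A\,\Delta\,A_i|$, and simultaneously extracting affine independence (A) from the same combinatorics, is the technical heart of the proof and the main obstacle.
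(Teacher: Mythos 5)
The statement you are trying to prove is not proved in the paper at all---it is imported verbatim from Mori \cite{mori_simplex_faces}---so your attempt can only be judged on its own terms. On those terms, part (1) is in good shape. The forward direction is the standard fact that a face of a face is a face, and your minimal-face reduction works for $\mathcal{O}(P)$: pairwise edges force, via \Cref{edges}(1), a nested chain $F_1 \subsetneq \cdots \subsetneq F_k$ with all differences $F_j \setminus F_i$ connected; the layer vectors $\chi_{F_m} - \chi_{F_{m-1}}$ have disjoint nonempty supports, giving affine independence; and since each $F_m$ is a filter, any comparability between two layers goes from the higher-indexed layer up into the lower-indexed one, so connectedness of $L_m \cup L_{m+1}$ forces consecutive layers to be comparable and the layer poset is a chain, whose $k$ up-sets correspond exactly to $F_1, \dots, F_k$. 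This pins the minimal face containing the barycenter down to $\conv(\chi_{F_1},\dots,\chi_{F_k})$, as required.

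The genuine gap is part (2), and it is a missing idea, not a missing detail. Your reduction for $\mathcal{C}(P)$ is correct as far as it goes: the tight chains at the barycenter are precisely the chains meeting each $A_i$ exactly once, and what remains is (A) affine independence of $\chi_{A_1},\dots,\chi_{A_k}$ and (B) that no antichain $A \subseteq \bigcup_i A_i$ other than $A_1,\dots,A_k$ meets every tight chain exactly once. You prove neither. The $k=2$ case of (B) that you invoke is literally equivalent to the hypothesis that $\conv(\chi_{A_1},\chi_{A_2})$ is an edge (i.e.\ to the Hibi--Li characterization \Cref{edges}(2)), so it supplies no mechanism for $k \ge 3$, where a hypothetical extra antichain $A$ must be played off against all the symmetric differences $A_i \,\Delta\, A_j$ simultaneously; the exchange or induction argument needed there is exactly the content of Mori's theorem for the chain polytope, not a routine verification. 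As you say yourself, this step is ``the technical heart of the proof and the main obstacle''; since it is absent, the proposal establishes only the order-polytope half of the statement.
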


\section{Counting simplex faces}
In this section, we prove theorems related to the number of simplex faces in polytopes, which will allow us to prove our main results in \Cref{section:main}. 

\begin{lemma}\label{vee_simplex}
    Let $\mathcal{P}$ and $\mathcal{Q}$ be polytopes both containing origins as a vertex.
    Then $\mathcal{P} \vee \mathcal{Q}$ is a simplex if and only if both $\mathcal{P}$ and $\mathcal{Q}$ are simplices.
\end{lemma}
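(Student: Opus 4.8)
The plan is to reduce the statement to a count of vertices together with a computation of dimension, using the elementary fact that a polytope of dimension $d$ is a simplex if and only if it has exactly $d+1$ vertices, together with the fact that an arbitrary $d$-dimensional polytope always has at least $d+1$ vertices. Writing $d_\mathcal{P} = \dim(\mathcal{P})$ and $d_\mathcal{Q} = \dim(\mathcal{Q})$, and letting $v_\mathcal{P}, v_\mathcal{Q}$ be the numbers of vertices of $\mathcal{P}$ and $\mathcal{Q}$, it therefore suffices to show that $\mathcal{P} \vee \mathcal{Q}$ has dimension $d_\mathcal{P} + d_\mathcal{Q}$ and exactly $v_\mathcal{P} + v_\mathcal{Q} - 1$ vertices.

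Both facts I would read off from \Cref{faces_of_PvQ}. For the vertices, I would run through the two families of faces and keep only the $0$-dimensional ones. A face of type (1), $F \vee G$ with $F \in L^0(\mathcal{P})$ and $G \in L^0(\mathcal{Q})$, has dimension $\dim(F) + \dim(G)$, so it is a vertex only when both $F$ and $G$ are the origin, yielding the single vertex at the origin. A face of type (2), $F * G$ with $F \in L^1(\mathcal{P})$ and $G \in L^1(\mathcal{Q})$, has dimension $\dim(F) + \dim(G) + 1$; since the empty face lies in both $L^1(\mathcal{P})$ and $L^1(\mathcal{Q})$ and has dimension $-1$, this is a vertex precisely when exactly one of $F, G$ is empty and the other is a vertex not containing the origin. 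Thus the vertices of $\mathcal{P} \vee \mathcal{Q}$ are the origin together with the non-origin vertices of $\mathcal{P}$ and of $\mathcal{Q}$, and these are pairwise distinct since the two groups have disjoint supports among the $m$ and $n$ coordinates. This gives the count $v_\mathcal{P} + v_\mathcal{Q} - 1$. For the dimension, I would note that $\mathcal{P} \vee \mathcal{Q}$ is itself the type-(1) face with $F = \mathcal{P}$ and $G = \mathcal{Q}$, whose dimension is $d_\mathcal{P} + d_\mathcal{Q}$; alternatively one checks directly that the affine hulls of $\mathcal{P} \times \{0\}^n$ and $\{0\}^m \times \mathcal{Q}$ are linear subspaces meeting only at the origin, so their span has dimension $d_\mathcal{P} + d_\mathcal{Q}$.

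Combining these, $\mathcal{P} \vee \mathcal{Q}$ is a simplex if and only if $v_\mathcal{P} + v_\mathcal{Q} - 1 = (d_\mathcal{P} + d_\mathcal{Q}) + 1$, that is, if and only if $(v_\mathcal{P} - d_\mathcal{P} - 1) + (v_\mathcal{Q} - d_\mathcal{Q} - 1) = 0$. Each summand is nonnegative because every polytope has at least $\dim + 1$ vertices, so the sum vanishes exactly when both summands do, i.e.\ when $v_\mathcal{P} = d_\mathcal{P} + 1$ and $v_\mathcal{Q} = d_\mathcal{Q} + 1$, which is to say when both $\mathcal{P}$ and $\mathcal{Q}$ are simplices. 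I expect the main point requiring care to be the dimension bookkeeping attached to \Cref{faces_of_PvQ}, namely the formulas $\dim(F \vee G) = \dim(F) + \dim(G)$ and $\dim(F * G) = \dim(F) + \dim(G) + 1$, and in particular the correct handling of the empty face inside the join; once these are pinned down, the remaining argument is immediate.
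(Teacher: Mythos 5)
Your proposal is correct and takes essentially the same approach as the paper: both arguments reduce the lemma to the two formulas $\dim(\mathcal{P} \vee \mathcal{Q}) = \dim(\mathcal{P}) + \dim(\mathcal{Q})$ and $f_0(\mathcal{P} \vee \mathcal{Q}) = f_0(\mathcal{P}) + f_0(\mathcal{Q}) - 1$, and then conclude via the standard fact that a $d$-dimensional polytope has at least $d+1$ vertices, with equality exactly for simplices. The only difference is bookkeeping of sources: the paper cites these two formulas from \cite{f-vector_inequalities}, while you rederive them inline from \Cref{faces_of_PvQ} by identifying the vertices and the top face of $\mathcal{P} \vee \mathcal{Q}$, which is a sound (if slightly longer) way to obtain the same ingredients.
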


\begin{proof}
    In our earlier paper~\cite{f-vector_inequalities} we showed that $\dim(\mathcal{P} \vee \mathcal{Q}) = \dim(\mathcal{P}) + \dim(\mathcal{Q})$ \cite[Proposition 3.1]{f-vector_inequalities}, and that $f_0(\mathcal{P} \vee \mathcal{Q}) = f_0(\mathcal{P}) + f_0(\mathcal{Q}) - 1$ \cite[Proposition 3.2]{f-vector_inequalities}.
    
    If both $\mathcal{P}$ and $\mathcal{Q}$ are simplices, then $f_0(\mathcal{P}) = \dim(\mathcal{P}) + 1$  and $f_0(\mathcal{Q}) = \dim(\mathcal{Q}) + 1$.
    Thus $f_0(\mathcal{P} \vee \mathcal{Q}) = \dim(\mathcal{P} \vee \mathcal{Q}) +1$ and therefore $\mathcal{P} \vee \mathcal{Q}$ is a simplex.
    
    If, say, $\mathcal{P}$ is not a simplex, then $f_0(\mathcal{P}) > \dim(\mathcal{P}) + 1$ and $f_0(\mathcal{Q}) \ge \dim(\mathcal{Q}) + 1$.
    It follows that $f_0(\mathcal{P} \vee \mathcal{Q}) > \dim(\mathcal{P} \vee \mathcal{Q}) + 1$, implying that $\mathcal{P} \vee \mathcal{Q}$ is not a simplex.
\end{proof}

For any polytopes $\mathcal{P}$ and $\mathcal{Q}$ we have $f_0(\mathcal{P} * \mathcal{Q}) = f_0(\mathcal{P}) + f_0(\mathcal{Q})$ and $\dim(\mathcal{P} * \mathcal{Q}) = \dim(\mathcal{P}) + \dim(\mathcal{Q}) + 1$.
Thus by a similar vertex--dimension counting as in \Cref{vee_simplex} we obtain

\begin{lemma}\label{join_simplex}
    Let $\mathcal{P}$ and $\mathcal{Q}$ be any polytopes.
    Then $\mathcal{P} * \mathcal{Q}$ is a simplex if and only if both $\mathcal{P}$ and $\mathcal{Q}$ are simplices.
\end{lemma}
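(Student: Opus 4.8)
The plan is to reuse verbatim the vertex–dimension counting strategy from \Cref{vee_simplex}, now fed by the two formulas just recalled: $f_0(\mathcal{P} * \mathcal{Q}) = f_0(\mathcal{P}) + f_0(\mathcal{Q})$ and $\dim(\mathcal{P} * \mathcal{Q}) = \dim(\mathcal{P}) + \dim(\mathcal{Q}) + 1$. The single governing principle is that a polytope $R$ is a simplex precisely when $f_0(R) = \dim(R) + 1$, together with the fact that $f_0(R) \ge \dim(R) + 1$ holds for every polytope.

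First I would handle the forward implication. Assuming both $\mathcal{P}$ and $\mathcal{Q}$ are simplices, I substitute $f_0(\mathcal{P}) = \dim(\mathcal{P}) + 1$ and $f_0(\mathcal{Q}) = \dim(\mathcal{Q}) + 1$ into the additive vertex formula and compare with the dimension formula; a one-line computation gives $f_0(\mathcal{P} * \mathcal{Q}) = \dim(\mathcal{P} * \mathcal{Q}) + 1$, so the join is a simplex.

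Then I would argue the converse by contraposition. If one of the factors, say $\mathcal{P}$, is not a simplex, then $f_0(\mathcal{P}) > \dim(\mathcal{P}) + 1$, while the general bound yields $f_0(\mathcal{Q}) \ge \dim(\mathcal{Q}) + 1$. Summing the strict and non-strict inequalities and again comparing against $\dim(\mathcal{P} * \mathcal{Q}) = \dim(\mathcal{P}) + \dim(\mathcal{Q}) + 1$ produces $f_0(\mathcal{P} * \mathcal{Q}) > \dim(\mathcal{P} * \mathcal{Q}) + 1$, so $\mathcal{P} * \mathcal{Q}$ fails to be a simplex.

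There is essentially no genuine obstacle here: the proof is a direct transcription of \Cref{vee_simplex} with the subdirect-sum formulas replaced by the join formulas, and the only point requiring a moment's care is confirming that the two input formulas apply to arbitrary (including low-dimensional or empty) polytopes and that the universal inequality $f_0 \ge \dim + 1$ is invoked correctly. Since both formulas are already supplied and the template is in hand, the statement follows immediately.
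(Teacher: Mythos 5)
Your proposal is correct and matches the paper's own argument exactly: the paper proves this lemma by citing the formulas $f_0(\mathcal{P} * \mathcal{Q}) = f_0(\mathcal{P}) + f_0(\mathcal{Q})$ and $\dim(\mathcal{P} * \mathcal{Q}) = \dim(\mathcal{P}) + \dim(\mathcal{Q}) + 1$ and repeating the vertex--dimension counting of \Cref{vee_simplex}, which is precisely what you do in both directions.
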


\begin{proposition}\label{s_poly_vee}
    Let $\mathcal{P}$ and $\mathcal{Q}$ be polytopes both containing origins as a vertex.
    Then
    \begin{equation*}
        S_{\mathcal{P} \vee \mathcal{Q}}(x) = \frac{1}{x} S^0_\mathcal{P}(x) S^0_{\mathcal{Q}}(x) + S_{\mathcal{P}}^1(x) S^1_{\mathcal{Q}}(x).
    \end{equation*}
\end{proposition}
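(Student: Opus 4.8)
The plan is to combine the complete face description of $\mathcal{P} \vee \mathcal{Q}$ in \Cref{faces_of_PvQ} with the two simplex criteria just established. By \Cref{faces_of_PvQ}, every face of $\mathcal{P} \vee \mathcal{Q}$ is either of the form $F \vee G$ with $F \in L^0(\mathcal{P})$ and $G \in L^0(\mathcal{Q})$, or of the form $F * G$ with $F \in L^1(\mathcal{P})$ and $G \in L^1(\mathcal{Q})$, and these two families are disjoint (the first consists of faces containing the origin, the second of faces avoiding it). Now \Cref{vee_simplex} says that a type-(1) face $F \vee G$ is a simplex precisely when both $F$ and $G$ are simplices, while \Cref{join_simplex} says the same for a type-(2) face $F * G$. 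Hence $L_\simp(\mathcal{P} \vee \mathcal{Q})$ splits as a disjoint union of $\{F \vee G : F \in L^0_\simp(\mathcal{P}),\ G \in L^0_\simp(\mathcal{Q})\}$ and $\{F * G : F \in L^1_\simp(\mathcal{P}),\ G \in L^1_\simp(\mathcal{Q})\}$.

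First I would handle the type-(1) contribution. Using $\dim(F \vee G) = \dim(F) + \dim(G)$ from \cite[Proposition 3.1]{f-vector_inequalities}, each such face contributes $x^{\dim(F \vee G) + 1} = \tfrac{1}{x}\, x^{\dim(F)+1}\, x^{\dim(G)+1}$ to $S_{\mathcal{P} \vee \mathcal{Q}}(x)$. Summing over all admissible pairs and factoring the resulting double sum gives exactly $\tfrac{1}{x} S^0_\mathcal{P}(x) S^0_\mathcal{Q}(x)$. Next I would treat type (2) identically, now using $\dim(F * G) = \dim(F) + \dim(G) + 1$: each such face contributes $x^{\dim(F * G)+1} = x^{\dim(F)+1}\, x^{\dim(G)+1}$, with no $\tfrac{1}{x}$ correction, so summing and factoring yields $S^1_\mathcal{P}(x) S^1_\mathcal{Q}(x)$. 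Adding the two contributions produces the claimed identity.

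The points requiring care are bookkeeping rather than genuine obstacles. One must verify that within each type the parametrization is injective, so that no simplex face is counted twice; this is part of the content of \Cref{faces_of_PvQ}, which gives a bona fide enumeration of the faces. The other delicate point is the empty-face edge case: the empty set lies in $L^1$ of each factor, and $\emptyset * \emptyset = \emptyset$ is the $(-1)$-dimensional face of $\mathcal{P} \vee \mathcal{Q}$. This is handled automatically because $S^1$ is defined to include the $i = -1$ term, so the constant terms of $S^1_\mathcal{P}$ and $S^1_\mathcal{Q}$ multiply to count the empty face exactly once.

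As a consistency check I would note that $S^0_\mathcal{P}(x)$ and $S^0_\mathcal{Q}(x)$ each have vanishing constant term, since the origin is the unique lowest-dimensional simplex face in $L^0$ and contributes the term $x$; consequently $S^0_\mathcal{P}(x) S^0_\mathcal{Q}(x)$ is divisible by $x^2$, and dividing by $x$ yields a genuine element of $\mathbb{Z}_{\ge 0}[x]$, in agreement with $S_{\mathcal{P} \vee \mathcal{Q}}(x)$ being a polynomial with nonnegative integer coefficients.
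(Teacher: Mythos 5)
Your proof is correct and takes essentially the same approach as the paper: decompose the faces of $\mathcal{P} \vee \mathcal{Q}$ via \Cref{faces_of_PvQ} into origin-containing faces $F \vee G$ and origin-avoiding joins $F * G$, restrict both families to simplex faces, and sum using the two dimension formulas. If anything, your explicit appeal to \Cref{vee_simplex} for the type-(1) faces is slightly more precise than the paper's proof, which cites only \Cref{join_simplex} when restricting both bijections to simplices.
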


\begin{proof}
    In our earlier paper \cite[Proposition 3.7]{f-vector_inequalities} we showed that 
    \begin{align*}
        L^0(\mathcal{P}) \times L^0(\mathcal{Q}) &\longrightarrow L^0(\mathcal{P} \vee \mathcal{Q}) \\
        (F,G) &\longmapsto F \vee G
    \end{align*}
    and
    \begin{align*}
        L^1(\mathcal{P}) \times L^1(\mathcal{Q}) &\longrightarrow L^1(\mathcal{P} \vee \mathcal{Q}) \\
        (F,G) &\longmapsto F * G
    \end{align*}
    are a bijections.
    By \Cref{join_simplex} we can restrict both maps to simplex faces to obtain bijections
    \begin{align*}
        L^0_\simp(\mathcal{P}) \times L^0_\simp(\mathcal{Q}) &\longrightarrow L^0_\simp(\mathcal{P} \vee \mathcal{Q}) \\
        (F,G) &\longmapsto F \vee G
    \end{align*}
    and
    \begin{align*}
        L^1_\simp(\mathcal{P}) \times L^1_\simp(\mathcal{Q}) &\longrightarrow L^1_\simp(\mathcal{P} \vee \mathcal{Q}) \\
        (F,G) &\longmapsto F* G.
    \end{align*}
    
    We may therefore compute
    \begin{align*}
        S_{\mathcal{P} \vee \mathcal{Q}}(x) &= S^0_{\mathcal{P} \vee \mathcal{Q}}(x) + S^1_{\mathcal{P} \vee \mathcal{Q}}(x) \\
        &= \sum_{H \in L^0_\simp(\mathcal{P} \vee \mathcal{Q})} x^{\dim(H) + 1} + \sum_{H \in L^1_\simp(\mathcal{P} \vee \mathcal{Q})} x^{\dim(H) + 1} \\
        &= \sum_{\substack{F \in L^0_\simp(\mathcal{P}) \\ G \in L^0_\simp(\mathcal{Q})}} x^{\dim(F \vee G) + 1} + \sum_{\substack{F \in L^1_\simp(\mathcal{P}) \\ G \in L^1_\simp(\mathcal{Q})}} x^{\dim(F * G) + 1}  \\
        &= \frac{1}{x} \sum_{\substack{F \in L^0_\simp(\mathcal{P}) \\ G \in L^0_\simp(\mathcal{Q})}} x^{\dim(F) + 1} x^{\dim(G) + 1}  + \sum_{\substack{F \in L^1_\simp(\mathcal{P}) \\ G \in L^1_\simp(\mathcal{Q})}} x^{\dim(F) + 1} x^{\dim(G) + 1}  \\
        &= \frac{1}{x} S^0_{\mathcal{P}}(x) S^0_{\mathcal{Q}}(x) + S^1_\mathcal{P}(x)S^1_\mathcal{Q}(x).
    \end{align*}
\end{proof}

Combining \Cref{ordinal_sum} and \Cref{s_poly_vee} we obtain

\begin{corollary}\label{S_poly_order_chain}
    For any posets $A$ and $B$,
    \begin{align*}
        S_{\mathcal{O}(A<B)}(x) &= \frac{1}{x} S^0_{\mathcal{O}(A)}(x)S^0_{\mathcal{O}(B^\op)}(x) + S^1_{\mathcal{O}(A)}(x)S^1_{\mathcal{O}(B^\op)}(x) \\
        S_{\mathcal{C}(A<B)}(x) &= \frac{1}{x} S^0_{\mathcal{C}(A)}(x)S^0_{\mathcal{C}(B)}(x) + S^1_{\mathcal{C}(A)}(x)S^1_{\mathcal{C}(B)}(x).
    \end{align*}
\end{corollary}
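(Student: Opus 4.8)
The plan is to obtain both identities by a direct substitution, applying \Cref{ordinal_sum} and then \Cref{s_poly_vee}. The one conceptual point I would record first is that the polynomial $S_\mathcal{P}(x)$ is a purely combinatorial invariant of $\mathcal{P}$: both $\dim(F)$ and the property of a face $F$ being a simplex are determined by the face lattice $L(\mathcal{P})$, so any two combinatorially isomorphic polytopes have the same $S$-polynomial. This matters because \Cref{ordinal_sum} gives only a combinatorial isomorphism $\mathcal{O}(A<B) \cong \mathcal{O}(A) \vee \mathcal{O}(B^\op)$ in the order case, whereas it gives an honest equality $\mathcal{C}(A<B) = \mathcal{C}(A) \vee \mathcal{C}(B)$ in the chain case.

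Next I would verify the hypothesis of \Cref{s_poly_vee}, namely that each summand contains the origin as a vertex. This is immediate: the origin $0 = \chi_\emptyset$ is the vertex of $\mathcal{O}(A)$ and of $\mathcal{O}(B^\op)$ corresponding to the empty filter, and the vertex of $\mathcal{C}(A)$ and of $\mathcal{C}(B)$ corresponding to the empty antichain, exactly as in the edge case discussed after the definitions of the two polytopes. Hence \Cref{s_poly_vee} applies to both $\mathcal{O}(A) \vee \mathcal{O}(B^\op)$ and $\mathcal{C}(A) \vee \mathcal{C}(B)$.

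Finally I would assemble the two identities. For the chain polytope, the equality $\mathcal{C}(A<B) = \mathcal{C}(A) \vee \mathcal{C}(B)$ together with \Cref{s_poly_vee} applied to $(\mathcal{P},\mathcal{Q}) = (\mathcal{C}(A),\mathcal{C}(B))$ yields the second claim verbatim. For the order polytope, the combinatorial invariance of $S$ gives $S_{\mathcal{O}(A<B)}(x) = S_{\mathcal{O}(A) \vee \mathcal{O}(B^\op)}(x)$, and \Cref{s_poly_vee} applied to $(\mathcal{P},\mathcal{Q}) = (\mathcal{O}(A),\mathcal{O}(B^\op))$ then gives the first claim. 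There is no real obstacle here; the only subtlety worth flagging is that the finer decomposition into $S^0$ and $S^1$ need not be preserved by an arbitrary combinatorial isomorphism, since it refers to a distinguished vertex (the origin). This causes no trouble, however, because the left-hand sides of both identities involve only the total polynomial $S$, while the origin-dependent pieces $S^0$ and $S^1$ appear only on the right, evaluated at the genuine factor polytopes $\mathcal{O}(A)$, $\mathcal{O}(B^\op)$, $\mathcal{C}(A)$, and $\mathcal{C}(B)$.
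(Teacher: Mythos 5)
Your proposal is correct and takes essentially the same route as the paper, which obtains the corollary directly by combining \Cref{ordinal_sum} with \Cref{s_poly_vee}; your additional remarks (that $S$ is a combinatorial invariant, so the mere combinatorial isomorphism $\mathcal{O}(A<B) \cong \mathcal{O}(A) \vee \mathcal{O}(B^\op)$ suffices, and that the origin-dependent polynomials $S^0, S^1$ are only ever evaluated at the genuine factor polytopes) simply make explicit the details the paper leaves tacit.
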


\begin{proposition}\label{cp_le_op}
    For any poset $P$,
    \begin{equation*}
        S^0_{\mathcal{C}(P)} (x) \le S_{\mathcal{O}(P)}^0(x).
    \end{equation*}
\end{proposition}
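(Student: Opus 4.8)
The plan is to describe combinatorially the simplex faces that contain the origin in each of the two polytopes, and then to exhibit a dimension-preserving injection from those of $\mathcal{C}(P)$ into those of $\mathcal{O}(P)$. Throughout, the origin is the vertex $\chi_\emptyset = 0$, which corresponds to the empty filter in $\mathcal{O}(P)$ and the empty antichain in $\mathcal{C}(P)$.

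First I would pin down the origin-simplices of $\mathcal{C}(P)$. A $k$-dimensional simplex face of $\mathcal{C}(P)$ through the origin has vertex set $\{0,\chi_{A_1},\dots,\chi_{A_k}\}$ for distinct nonempty antichains $A_1,\dots,A_k$, and by \Cref{mori_theorem}(2) this is a face precisely when every pair of vertices spans an edge. Applying \Cref{edges}(2) to the pair $0,\chi_{A_i}$ forces $A_i = A_i\,\Delta\,\emptyset$ to be connected; but an antichain is connected as an induced subposet only when it is a singleton, so each $A_i=\{a_i\}$. Applying \Cref{edges}(2) to $\chi_{\{a_i\}},\chi_{\{a_j\}}$ then forces $\{a_i,a_j\}=\{a_i\}\,\Delta\,\{a_j\}$ to be connected, i.e.\ $a_i$ and $a_j$ to be comparable. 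Hence the origin-simplices of dimension $k$ correspond exactly to the $k$-element chains $\{a_1,\dots,a_k\}$ of $P$, and $s^0_k(\mathcal{C}(P))$ equals the number of such chains.

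Next I would construct the injection. To a chain $a_1<\cdots<a_k$ of $P$ I associate the nested principal filters $\filter{a_1}\supsetneq\cdots\supsetneq\filter{a_k}$ and the simplex $\conv(0,\chi_{\filter{a_1}},\dots,\chi_{\filter{a_k}})$ in $\mathcal{O}(P)$. To check that this is a face, I verify the pairwise edge conditions of \Cref{edges}(1): each $\filter{a_i}$ is connected because it has $a_i$ as a minimum, and for $a_i<a_j$ the difference $\filter{a_i}\setminus\filter{a_j}=\{q : q\ge a_i,\ q\not\ge a_j\}$ contains $a_i$ and consists entirely of elements comparable to $a_i$, hence is connected. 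By \Cref{mori_theorem}(1) the claimed set is then a $k$-dimensional simplex face of $\mathcal{O}(P)$, and it contains the origin. Injectivity is immediate: a principal filter determines its generator as its unique minimal element, so the set $\{a_1,\dots,a_k\}$ can be recovered from the filters, and distinct chains yield distinct simplices. Since the construction preserves dimension, it gives $s^0_k(\mathcal{C}(P))\le s^0_k(\mathcal{O}(P))$ for every $k$, which is exactly the coefficientwise inequality $S^0_{\mathcal{C}(P)}(x)\le S^0_{\mathcal{O}(P)}(x)$.

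The one genuinely delicate point is the connectivity check in the second step: \Cref{mori_theorem} requires \emph{all} pairwise differences $\filter{a_i}\setminus\filter{a_j}$ to be connected, not merely the consecutive ones, and the clean reason this holds is that each such difference is \emph{star-shaped} about $a_i$, every element being comparable to $a_i$. It is worth noting that the injection is typically far from surjective: any connected filter that is not principal, for instance all of $\mathbf{X}$ (which has two minimal elements), produces an origin-edge of $\mathcal{O}(P)$ lying outside the image, consistent with the inequality being strict in general.
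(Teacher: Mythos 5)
Your proposal is correct and follows essentially the same route as the paper's own proof: both characterize the origin-simplices of $\mathcal{C}(P)$ as chains $a_1<\cdots<a_k$ via \Cref{edges} and \Cref{mori_theorem}, map each chain to the simplex on the nested principal filters $\filter{a_1}\supsetneq\cdots\supsetneq\filter{a_k}$, and verify the pairwise edge conditions by noting every element of $\filter{a_i}\setminus\filter{a_j}$ is comparable to $a_i$. Your explicit injectivity argument (recovering each $a_i$ as the unique minimal element of $\filter{a_i}$) fills in a detail the paper leaves as ``straightforward to check.''
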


\begin{proof}
    Fix an integer $k \in \{ 0,1,\dots,|P| \}$.
    We need to show $s^0_k(\mathcal{C}(P)) \le s^0_k(\mathcal{O}(P))$.
    Let
    \begin{equation*}
        \conv(\chi_{\emptyset},\chi_{A_1},\dots,\chi_{A_k})
    \end{equation*}
    be a $k$-simplex  face in $\mathcal{C}(P)$ containing the origin where $A_1,\dots,A_k$ are antichains of $P$.
    By \Cref{edges} we know that $A_i \Delta \emptyset$ and $A_i \Delta A_j$ are connected for all $i,j \in \{ 1,\dots,k \}$.
    Thus $A_i = \{ p_i \}$ for some $p_i \in P$ for all $i \in \{ 1,\dots,k \}$.
    Since $A_i \Delta A_j = \{ p_i,p_j \}$ is connected, the elements $p_i$ and $p_j$ need to be comparable.
    Therefore $\{ p_1,\dots,p_k \}$ is a chain in $P$.
    Without loss of generality, suppose $p_1 < \dots < p_k$.
    We map this simplex to
    \begin{equation*}
        \conv(\chi_\emptyset,\chi_{\filter{p_1}},\dots,\chi_{\filter{p_k}}) \subseteq \mathcal{O}(P).
    \end{equation*}
    Here $\emptyset \subsetneq \filter{p_k} \subsetneq \filter{p_{k-1}} \subsetneq \cdots \subsetneq \filter{p_1}$.
    Clearly $\filter{p_i} = \filter{p_i} \setminus \emptyset$ is connected for all $i \in \{ 1,\dots,k \}$.
    Furthermore, for all $i,j \in \{ 1,\dots,k \}$ with $i<j$ the difference $\filter{p_i} \setminus \filter{p_j}$ is connected since every element in it is comparable to $p_i$.
    Hence by \Cref{edges} the vertices $\chi_\emptyset,\chi_{\filter{p_1}},\dots,\chi_{\filter{p_k}}$ are pairwise adjacent in the 1-skeleton of $\mathcal{O}(P)$.
    Thus by \Cref{mori_theorem} we conclude that $\conv(\chi_\emptyset,\chi_{\filter{p_1}},\dots,\chi_{\filter{p_k}})$ is a $k$-simplex face in $\mathcal{O}(P)$.
    
    This gives us a map
    \begin{align*}
        \{ F \in L^0_\simp(\mathcal{C}(P)) \colon \dim(F) = k \} &\longrightarrow \{ F \in L_\simp^0(\mathcal{O}(P)) \colon \dim(F) = k \} \\
        \conv(\chi_\emptyset,\chi_{A_1},\dots,\chi_{A_k}) &\longmapsto \conv(\chi_\emptyset,\chi_{\filter{p_1}},\dots,\chi_{\filter{p_k}})
    \end{align*}
    and it is straightforward to check that this map is injective.
    Hence $s_k^0(\mathcal{C}(P)) \le s_k^0(\mathcal{O}(P))$ as desired.
\end{proof}

\begin{proposition}\label{L0_vs_L1}
    Let $\mathcal{P}$ be a polytope containing the origin as a vertex.
    Then for all $k \ge 0$,
    \begin{equation*}
        s_k^0(\mathcal{P}) \le s_{k-1}^1(\mathcal{P}).
    \end{equation*}
\end{proposition}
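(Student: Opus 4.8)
The plan is to construct an explicit injection from the set of $k$-dimensional simplex faces of $\mathcal{P}$ containing the origin into the set of $(k-1)$-dimensional simplex faces not containing the origin, exploiting the elementary fact that a face of a face of a polytope is again a face. First I would take a simplex face $F \in L^0_\simp(\mathcal{P})$ with $\dim(F) = k$. Since $F$ is a simplex having the origin among its $k+1$ vertices, I can write $F = \conv(0, v_1, \dots, v_k)$, where $v_1, \dots, v_k$ are its remaining vertices, and consider the facet of $F$ opposite the origin, namely $G \coloneqq \conv(v_1, \dots, v_k)$, which is a $(k-1)$-dimensional simplex.

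The key step is to argue that $G$ is itself a face of $\mathcal{P}$, not merely a subset: as $G$ is a face of $F$ and $F$ is a face of $\mathcal{P}$, the face-of-a-face principle gives $G \in L(\mathcal{P})$. Because the origin is a vertex of the simplex $F$ and $G$ is the facet opposite to it, we have $0 \notin G$, so in fact $G \in L^1_\simp(\mathcal{P})$ with $\dim(G) = k-1$. This is the only point that genuinely requires justification, and it is exactly where the face-of-a-face fact does the work; everything else is bookkeeping.

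Finally I would check injectivity of the assignment $F \mapsto G$: since the origin is a distinguished vertex, $F$ is recovered from $G$ as $\conv(G \cup \{0\})$ and is therefore uniquely determined, so distinct simplices in $L^0_\simp(\mathcal{P})$ have distinct images. This yields $s_k^0(\mathcal{P}) \le s_{k-1}^1(\mathcal{P})$. The edge case $k = 0$ fits the same recipe: the unique $0$-simplex $\{0\}$ maps to the empty face, i.e.\ the $(-1)$-simplex not containing the origin, consistent with the conventions fixed in the preliminaries. I do not anticipate any real obstacle beyond making the face-of-a-face claim precise.
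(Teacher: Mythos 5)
Your proposal is correct and follows exactly the paper's argument: map each $k$-simplex $\conv(0,v_1,\dots,v_k)\in L^0_\simp(\mathcal{P})$ to its facet $\conv(v_1,\dots,v_k)$ opposite the origin and observe injectivity. The paper states this very tersely; your added justifications (the face-of-a-face principle, $0\notin G$, recovery of $F$ as $\conv(G\cup\{0\})$, and the $k=0$ edge case matching the convention that $\emptyset$ is a $(-1)$-simplex) are precisely the details the paper leaves implicit.
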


\begin{proof}
    Given a $k$-simplex in $\mathcal{P}$ containing the origin we may map it to a $(k-1)$-simplex in $\mathcal{P}$ not containing the origin by
    \begin{equation*}
        \conv(0,v_1,\dots,v_k) \longmapsto \conv(v_1,\dots,v_k).
    \end{equation*}
    The injectivity of this map proves the proposition.
\end{proof}

\begin{corollary}\label{L0_poly_vs_L1_poly}
    For any polytope $\mathcal{P}$ containing the origin as a vertex,
    \begin{equation*}
        S_{\mathcal{P}}^0(x) \le x S_{\mathcal{P}}^1(x).
    \end{equation*}
\end{corollary}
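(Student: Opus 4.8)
The plan is to unwind the coefficient-wise definition of the polynomial inequality and reduce it directly to the pointwise bounds already established in \Cref{L0_vs_L1}.

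First I would write out both polynomials explicitly from their definitions, namely
\[
S_{\mathcal{P}}^0(x) = \sum_{i=0}^{\dim(\mathcal{P})} s_i^0(\mathcal{P})\, x^{i+1}
\qquad\text{and}\qquad
S_{\mathcal{P}}^1(x) = \sum_{i=-1}^{\dim(\mathcal{P})} s_i^1(\mathcal{P})\, x^{i+1}.
\]
Multiplying the latter by $x$ and shifting the summation index by one yields
\[
x\, S_{\mathcal{P}}^1(x) = \sum_{i=-1}^{\dim(\mathcal{P})} s_i^1(\mathcal{P})\, x^{i+2} = \sum_{k=0}^{\dim(\mathcal{P})+1} s_{k-1}^1(\mathcal{P})\, x^{k+1},
\]
so that the coefficient of $x^{k+1}$ on the right-hand side of the asserted inequality is exactly $s_{k-1}^1(\mathcal{P})$.

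Next I would compare coefficients degree by degree. By the definition of $\le$ on $\mathbb{Z}_{\ge 0}[x]$, the inequality $S_{\mathcal{P}}^0(x) \le x\, S_{\mathcal{P}}^1(x)$ holds precisely when, for every $k$, the coefficient of $x^{k+1}$ on the left is at most the coefficient of $x^{k+1}$ on the right; that is, when $s_k^0(\mathcal{P}) \le s_{k-1}^1(\mathcal{P})$ for all $k \ge 0$, with $s_k^0(\mathcal{P})$ understood as zero outside the range $0 \le k \le \dim(\mathcal{P})$. This is exactly the content of \Cref{L0_vs_L1}, and the corollary follows at once.

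This argument is pure bookkeeping and presents no genuine obstacle; the only point that warrants care is the index shift introduced by the factor $x$. One must confirm that multiplying $S_{\mathcal{P}}^1(x)$ by $x$ aligns the coefficient of $x^{k+1}$ with $s_{k-1}^1(\mathcal{P})$ rather than $s_k^1(\mathcal{P})$, so that the coefficient comparison matches the inequality of \Cref{L0_vs_L1} on the nose.
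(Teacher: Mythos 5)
Your proof is correct and takes essentially the same route as the paper: both reduce the coefficient-wise inequality $S_{\mathcal{P}}^0(x) \le x S_{\mathcal{P}}^1(x)$ to \Cref{L0_vs_L1} by observing that multiplication by $x$ shifts degrees so that the coefficient of $x^{k+1}$ on the right is $s_{k-1}^1(\mathcal{P})$. You simply spell out the index bookkeeping that the paper's two-line proof leaves implicit.
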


\begin{proof}
    Multiplying a polynomial by $x$ shifts the degrees of the terms up by one and both polynomials $S_{\mathcal{P}}^0(x)$ and $xS_\mathcal{P}^1(x)$ have constant term 0.
    The claimed inequality thus follows from \Cref{L0_vs_L1}.
\end{proof}

\begin{lemma}\label{cartesian_prod_simplicies}
    For any polytopes $\mathcal{P}$ and $\mathcal{Q}$,
    \begin{equation*}
        s_0(\mathcal{P} \times \mathcal{Q}) = s_0(\mathcal{P})s_0(\mathcal{Q})
    \end{equation*}
    and for all $k \ge 1$,
    \begin{equation*}
        s_k(\mathcal{P} \times \mathcal{Q}) = s_k(\mathcal{P})s_0(\mathcal{Q}) + s_0(\mathcal{P})s_k(\mathcal{Q}).
    \end{equation*}
\end{lemma}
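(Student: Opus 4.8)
The plan is to invoke the standard description of the face lattice of a Cartesian product: every nonempty face of $\mathcal{P} \times \mathcal{Q}$ is of the form $F \times G$ for a unique pair of nonempty faces $F$ of $\mathcal{P}$ and $G$ of $\mathcal{Q}$, and this correspondence satisfies $\dim(F \times G) = \dim(F) + \dim(G)$ (see e.g.\ \cite{Ziegler}). Since every vertex is trivially a $0$-simplex, we have $s_0(\mathcal{P}) = f_0(\mathcal{P})$ for any polytope $\mathcal{P}$, and the vertices of $\mathcal{P} \times \mathcal{Q}$ are exactly the products of a vertex of $\mathcal{P}$ and a vertex of $\mathcal{Q}$. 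This immediately yields $s_0(\mathcal{P} \times \mathcal{Q}) = f_0(\mathcal{P} \times \mathcal{Q}) = f_0(\mathcal{P}) f_0(\mathcal{Q}) = s_0(\mathcal{P}) s_0(\mathcal{Q})$, which is the first claimed identity.

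For the second identity, the key step will be to characterize exactly which product faces $F \times G$ are simplices, by a vertex--dimension count in the same spirit as \Cref{vee_simplex} and \Cref{join_simplex}. Writing $a = \dim(F)$ and $b = \dim(G)$, one has $f_0(F \times G) = f_0(F) f_0(G) \ge (a+1)(b+1)$, with equality if and only if both $F$ and $G$ are simplices. Since $F \times G$ is a simplex precisely when $f_0(F \times G) = \dim(F \times G) + 1 = a + b + 1$, comparing with $(a+1)(b+1) = ab + a + b + 1$ forces $ab = 0$ together with equality throughout. Hence $F \times G$ is a simplex if and only if one of the factors is a vertex and the other is a simplex.

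It then remains to count. Fix $k \ge 1$ and consider a $k$-simplex face $F \times G$ with $\dim(F) + \dim(G) = k$. By the characterization just obtained, either $\dim(G) = 0$ and $F$ is a $k$-simplex of $\mathcal{P}$, or $\dim(F) = 0$ and $G$ is a $k$-simplex of $\mathcal{Q}$. Because $k \ge 1$, these two cases are mutually exclusive, so no face is counted twice. The first case contributes $s_k(\mathcal{P}) f_0(\mathcal{Q}) = s_k(\mathcal{P}) s_0(\mathcal{Q})$ faces and the second contributes $f_0(\mathcal{P}) s_k(\mathcal{Q}) = s_0(\mathcal{P}) s_k(\mathcal{Q})$, giving the desired formula. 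I expect the main obstacle to be the simplex characterization in the second paragraph: the counting is then a routine bookkeeping step, whereas pinning down that a product of two positive-dimensional polytopes is never a simplex (and noting that the hypothesis $k \ge 1$ is exactly what makes the two contributions disjoint, in contrast to the $k = 0$ case) is where the real content lies.
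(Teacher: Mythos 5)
Your proof is correct and takes essentially the same approach as the paper: both use the standard description of faces of $\mathcal{P} \times \mathcal{Q}$ as products $F \times G$, both hinge on the characterization that a product of nonempty polytopes is a simplex if and only if one factor is a point and the other a simplex, and both then count the two (disjoint, since $k \ge 1$) cases. The only difference is in how that characterization is justified --- the paper observes that two positive-dimensional factors would produce a square face, while you use a vertex--dimension count in the spirit of \Cref{vee_simplex} and \Cref{join_simplex}; both are valid one-line arguments.
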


\begin{proof}
    The first equation is a standard fact since $s_0(-) = f_0(-)$.
    For the second equation, recall that the $k$-faces of $\mathcal{P} \times \mathcal{Q}$ are obtained by taking all cartesian products $F \times G$ where $F \subseteq \mathcal{P}$ and $G \subseteq \mathcal{Q}$ are non-empty faces such that $\dim(F) + \dim(G) = k$.
    Note that the cartesian product of two non-empty polytopes is a simplex if and only if one of the polytopes is a simplex and the other is a point.
    Indeed, if both polytopes contain an edge then their cartesian product will contain a square face, implying that their product is not a simplex.
    With these observations the second equation follows.
\end{proof}

\begin{proposition}\label{cartesian_product_ineq}
    If posets $A$ and $B$ satisfy $S_{\mathcal{O}(A)}(x) \le S_{\mathcal{C}(A)}(x)$ and $S_{\mathcal{O}(B)}(x) \le S_{\mathcal{C}(B)}(x)$, then
    \begin{equation*}
        S_{\mathcal{O}(A \sqcup B)}(x) \le S_{\mathcal{C}(A \sqcup B)}(x).
    \end{equation*}
\end{proposition}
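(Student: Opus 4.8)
The plan is to realize the disjoint union of posets as a Cartesian product of polytopes and then invoke \Cref{cartesian_prod_simplicies}. First I would observe that, because no element of $A$ is comparable to any element of $B$ in $A \sqcup B$, every filter of $A \sqcup B$ splits uniquely as $F_A \sqcup F_B$ with $F_A$ a filter of $A$ and $F_B$ a filter of $B$, and correspondingly $\chi_{F_A \sqcup F_B} = (\chi_{F_A},\chi_{F_B}) \in \mathbb{R}^A \times \mathbb{R}^B$. Taking convex hulls of vertices, this gives $\mathcal{O}(A \sqcup B) = \mathcal{O}(A) \times \mathcal{O}(B)$. The same argument applied to antichains, which also factor as $S_A \sqcup S_B$, yields $\mathcal{C}(A \sqcup B) = \mathcal{C}(A) \times \mathcal{C}(B)$.

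With these identifications in hand, I would apply \Cref{cartesian_prod_simplicies} to both products and compare the simplex counts dimension by dimension. For $k=0$ the lemma gives $s_0(\mathcal{O}(A \sqcup B)) = s_0(\mathcal{O}(A))\,s_0(\mathcal{O}(B))$ and likewise on the chain side; since $s_0(\mathcal{O}(A)) \le s_0(\mathcal{C}(A))$ and $s_0(\mathcal{O}(B)) \le s_0(\mathcal{C}(B))$ are among the coefficient inequalities guaranteed by the hypothesis, and all these numbers are nonnegative, the product inequality $s_0(\mathcal{O}(A \sqcup B)) \le s_0(\mathcal{C}(A \sqcup B))$ follows. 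For $k \ge 1$ the lemma expresses $s_k$ of each product as a sum of two terms, namely $s_k(\mathcal{O}(A))\,s_0(\mathcal{O}(B)) + s_0(\mathcal{O}(A))\,s_k(\mathcal{O}(B))$ and its chain analogue. Comparing these two sums term by term, again using that each factor on the order side is bounded by the corresponding factor on the chain side and that all factors are nonnegative, gives $s_k(\mathcal{O}(A \sqcup B)) \le s_k(\mathcal{C}(A \sqcup B))$. Since the empty-face coefficient is $1$ on both sides, this establishes $S_{\mathcal{O}(A \sqcup B)}(x) \le S_{\mathcal{C}(A \sqcup B)}(x)$.

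I expect the only real subtlety to be bookkeeping rather than mathematics: one must keep the $k=0$ case separate from $k \ge 1$, since the Cartesian-product rule for simplices of \Cref{cartesian_prod_simplicies} is not a single multiplicative formula across all dimensions (the $s_0$ factor plays a distinguished role). It is worth noting that the argument does not even need the two vertex counts to agree, only that the order-side data is dominated coefficient-wise by the chain-side data and that every quantity involved is nonnegative, so that both products and sums preserve the inequality. Equivalently, one could package the comparison through the identity $S_{\mathcal{P} \times \mathcal{Q}}(x) - 1 = s_0(\mathcal{Q})\bigl(S_{\mathcal{P}}(x) - 1\bigr) + s_0(\mathcal{P})\bigl(S_{\mathcal{Q}}(x) - 1\bigr) - s_0(\mathcal{P})\,s_0(\mathcal{Q})\,x$ extracted from \Cref{cartesian_prod_simplicies}, but the dimension-by-dimension comparison is the most transparent route.
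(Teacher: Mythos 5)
Your proof is correct and follows exactly the paper's route: the paper's own proof simply cites the factorizations $\mathcal{O}(A \sqcup B) = \mathcal{O}(A) \times \mathcal{O}(B)$, $\mathcal{C}(A \sqcup B) = \mathcal{C}(A) \times \mathcal{C}(B)$ together with \Cref{cartesian_prod_simplicies}, which is precisely your argument. You have merely spelled out the coefficient-wise comparison (including the $k=0$ versus $k \ge 1$ bookkeeping) that the paper leaves implicit.
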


\begin{proof}
    Follows easily from \Cref{cartesian_prod_simplicies} and the fact that $\mathcal{O}(A \sqcup B) = \mathcal{O}(A) \times \mathcal{O}(B)$ and $\mathcal{C}(A \sqcup B) = \mathcal{C}(A) \times \mathcal{C}(B)$.
\end{proof}

Next we define additional polynomials that will help us prove a similar result for ordinal sums.
These polynomials are similar to the polynomials in our earlier paper \cite{f-vector_inequalities}.
For any poset $P$ let
\begin{align*}
    \tilde \alpha_P(x) &\coloneqq \frac{1}{x} S^0_{\mathcal{C}(P)}(x) \\
    \tilde \beta_P(x) &\coloneqq\frac{1}{x}S^0_{\mathcal{O}(P)}(x)  \\
    \tilde \gamma_P(x) &\coloneqq S^1_{\mathcal{O}(P)}(x) \\
    \tilde \delta_P(x) &\coloneqq S^1_{\mathcal{C}(P)}(x).
\end{align*}
Note that $\tilde \alpha_P(x) = \tilde \alpha_{P^\op}(x)$ and $\tilde \delta_P(x) = \tilde \delta_{P^\op}(x)$.

\begin{lemma}\label{abcd}
    For any poset $P$, if $S_{\mathcal{O}(P)}(x) \le S_{\mathcal{C}(P)}(x)$ then
    \begin{enumerate}[(1)]
        \item $0 \le x(\tilde \beta_P(x) - \tilde \alpha_P(x)) \le \tilde \delta_P(x) - \tilde \gamma_P(x)$, and
        \item $0 \le \tilde \alpha_P(x) \le \tilde \beta_P(x) \le \tilde \gamma_P(x) \le \tilde \delta_P(x)$.
    \end{enumerate}
\end{lemma}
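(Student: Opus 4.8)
The plan is to unfold all four tilde-polynomials back into the underlying $S$-polynomials and to observe that every asserted inequality is just a rearrangement of one of three coefficient-wise inequalities already at our disposal: \Cref{cp_le_op}, namely $S^0_{\mathcal{C}(P)}(x) \le S^0_{\mathcal{O}(P)}(x)$; \Cref{L0_poly_vs_L1_poly}, namely $S^0_\mathcal{P}(x) \le x\,S^1_\mathcal{P}(x)$, applied to $\mathcal{P}=\mathcal{O}(P)$ and to $\mathcal{P}=\mathcal{C}(P)$; and the standing hypothesis $S_{\mathcal{O}(P)}(x) \le S_{\mathcal{C}(P)}(x)$. Throughout I would use the elementary fact that dividing a polynomial with vanishing constant term by $x$ preserves coefficient-wise inequalities. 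This applies to every $S^0$-polynomial, since by definition these start in degree one.

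For part (2) I would verify the chain of four links one at a time. The nonnegativity $0 \le \tilde\alpha_P$ is immediate, as $S^0_{\mathcal{C}(P)}$ has nonnegative coefficients. The link $\tilde\alpha_P \le \tilde\beta_P$ is exactly \Cref{cp_le_op} after dividing by $x$. The link $\tilde\beta_P \le \tilde\gamma_P$ is \Cref{L0_poly_vs_L1_poly} for $\mathcal{O}(P)$, since $x\tilde\beta_P = S^0_{\mathcal{O}(P)} \le x\,S^1_{\mathcal{O}(P)} = x\tilde\gamma_P$. The final link $\tilde\gamma_P \le \tilde\delta_P$, i.e. $S^1_{\mathcal{O}(P)} \le S^1_{\mathcal{C}(P)}$, is the only step that genuinely combines two inputs: writing $S^1_{\mathcal{C}(P)} - S^1_{\mathcal{O}(P)} = (S_{\mathcal{C}(P)} - S_{\mathcal{O}(P)}) + (S^0_{\mathcal{O}(P)} - S^0_{\mathcal{C}(P)})$ exhibits the difference as a sum of two coefficient-wise nonnegative polynomials, the first by the hypothesis and the second by \Cref{cp_le_op}.

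For part (1), the left inequality $0 \le x(\tilde\beta_P - \tilde\alpha_P)$ is simply $S^0_{\mathcal{O}(P)} - S^0_{\mathcal{C}(P)} \ge 0$, once more \Cref{cp_le_op}. The right inequality $x(\tilde\beta_P - \tilde\alpha_P) \le \tilde\delta_P - \tilde\gamma_P$ unfolds to $S^0_{\mathcal{O}(P)} - S^0_{\mathcal{C}(P)} \le S^1_{\mathcal{C}(P)} - S^1_{\mathcal{O}(P)}$, and moving the $\mathcal{O}$-terms to one side and the $\mathcal{C}$-terms to the other turns this into exactly the hypothesis $S_{\mathcal{O}(P)} \le S_{\mathcal{C}(P)}$. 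There is no serious obstacle in this lemma; the content is entirely bookkeeping. The one point that requires a moment's care is the link $\tilde\gamma_P \le \tilde\delta_P$, where the hypothesis alone does not suffice and one must add \Cref{cp_le_op} to it; every other step is a direct translation.
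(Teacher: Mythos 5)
Your proof is correct and follows essentially the same route as the paper: both rest on \Cref{cp_le_op}, \Cref{L0_poly_vs_L1_poly}, and a rearrangement of the hypothesis, used in exactly the same roles. The only cosmetic difference is that the paper deduces $\tilde\gamma_P(x) \le \tilde\delta_P(x)$ by citing part (1), whereas you unroll that same combination of the hypothesis and \Cref{cp_le_op} explicitly.
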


\begin{proof}
    For part (1), the assumption $S_{\mathcal{O}(P)}(x) \le S_{\mathcal{C}(P)}(x)$ implies $S_{\mathcal{O}(P)}^0(x) + S_{\mathcal{O}(P)}^1(x) \le S_{\mathcal{C}(P)}^0(x) + S_{\mathcal{C}(P)}^1(x)$ and hence 
    \begin{equation*}
        S^0_{\mathcal{O}(P)}(x) - S^0_{\mathcal{C}(P)}(x) \le S^1_{\mathcal{C}(P)}(x) - S^1_{\mathcal{O}(P)}(x).
    \end{equation*}
    Note that $\tilde \beta_P(x) - \tilde \alpha_P(x) \ge 0$ by \Cref{cp_le_op}.
    From the definitions we then obtain (1).
    
    Let us then prove (2).
    We noticed above that $\tilde \alpha_P(x) \le \tilde \beta_P(x)$.
    The inequality $\tilde \beta_P(x) \le \tilde \gamma_P(x)$ is equivalent with $S_{\mathcal{O}(P)}^0(x) \le x S^1_{\mathcal{O}(P)}(x)$ which holds by \Cref{L0_poly_vs_L1_poly}.
    The inequality $\tilde \gamma_P(x) \le \tilde \delta_P(x)$ follows from part (1).
\end{proof}

\begin{proposition}\label{orderedsum_ineq}
    If $A$ and $B$ are posets that satisfy $S_{\mathcal{O}(A)}(x) \le S_{\mathcal{C}(A)}(x)$ and $S_{\mathcal{O}(B)}(x) \le S_{\mathcal{C}(B)}(x)$ then 
    \begin{equation*}
        S_{\mathcal{O}(A < B)}(x) \le S_{\mathcal{C}(A<B)}(x).
    \end{equation*}
\end{proposition}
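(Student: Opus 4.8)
The plan is to reduce the statement to a single coefficientwise polynomial inequality and then establish that inequality by a telescoping argument fed by the two chains of inequalities in \Cref{abcd}.

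First I would rewrite both sides using \Cref{S_poly_order_chain} together with the definitions of the tilde polynomials. Since $S^0_{\mathcal{O}(P)} = x\tilde\beta_P$, $S^1_{\mathcal{O}(P)} = \tilde\gamma_P$, $S^0_{\mathcal{C}(P)} = x\tilde\alpha_P$ and $S^1_{\mathcal{C}(P)} = \tilde\delta_P$, the formulas collapse to
\begin{align*}
    S_{\mathcal{O}(A<B)}(x) &= x\,\tilde\beta_A\tilde\beta_{B^\op} + \tilde\gamma_A\tilde\gamma_{B^\op}, \\
    S_{\mathcal{C}(A<B)}(x) &= x\,\tilde\alpha_A\tilde\alpha_B + \tilde\delta_A\tilde\delta_B.
\end{align*}
Using the identities $\tilde\alpha_B = \tilde\alpha_{B^\op}$ and $\tilde\delta_B = \tilde\delta_{B^\op}$ noted after the definitions, the chain side is also written through $B^\op$, so that both sides are expressed entirely in the tilde polynomials of $A$ and $B^\op$.

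A preliminary step is to make the hypothesis on $B$ usable for $B^\op$. Since $\mathcal{C}(B) = \mathcal{C}(B^\op)$ as sets and $x \mapsto \mathbf 1 - x$ is an affine isomorphism $\mathcal{O}(B) \to \mathcal{O}(B^\op)$, we get $S_{\mathcal{O}(B^\op)} = S_{\mathcal{O}(B)} \le S_{\mathcal{C}(B)} = S_{\mathcal{C}(B^\op)}$, so \Cref{abcd} applies to both $A$ and $B^\op$. Writing $a,b,c,d$ for $\tilde\alpha_A,\tilde\beta_A,\tilde\gamma_A,\tilde\delta_A$ and $a',b',c',d'$ for the corresponding polynomials of $B^\op$, all eight lie in $\mathbb{Z}_{\ge 0}[x]$ and the goal becomes
\begin{equation*}
    x\,bb' + cc' \le x\,aa' + dd'.
\end{equation*}

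I would prove this by splitting the difference $dd' - cc' - x(bb' - aa')$ into two telescoping pieces,
\begin{equation*}
    \bigl(d'(d-c) - x\,b'(b-a)\bigr) + \bigl(c(d'-c') - x\,a(b'-a')\bigr),
\end{equation*}
and showing each bracket has nonnegative coefficients; since all variables lie in $\mathbb{Z}_{\ge 0}[x]$, multiplying a coefficientwise inequality by any of them preserves the order. For the first bracket I multiply $x(b-a)\le d-c$ (from \Cref{abcd}(1) for $A$) by $b'$ and then enlarge $b'$ to $d'$ via $b'\le d'$ and $d-c\ge 0$. For the second bracket I multiply $x(b'-a')\le d'-c'$ (from \Cref{abcd}(1) for $B^\op$) by $a$ and enlarge $a$ to $c$ via $a\le c$ and $d'-c'\ge 0$. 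The hard part is not the algebra but discovering this particular grouping: each mixed inequality of \Cref{abcd}(1) must be paired with exactly the complementary monotonicity step of \Cref{abcd}(2), and swapping the roles of the factors would leave a term of indeterminate sign. Once both brackets are seen to be nonnegative, their sum gives the claimed inequality.
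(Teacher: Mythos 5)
Your proof is correct and is essentially the paper's own argument: the two brackets in your telescoping decomposition are precisely the paper's inequalities \eqref{eq:4} and \eqref{eq:5}, obtained by pairing \Cref{abcd}(1) for one poset with \Cref{abcd}(2) for the other in exactly the same way. The only cosmetic difference is that the paper phrases it as rewriting both sides of the target inequality and adding the two product inequalities, rather than as showing the two brackets of the difference are nonnegative.
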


This proof is essentially the same as the proof of \cite[Theorem 6.2]{f-vector_inequalities}.
We provide the proof here for completeness.

\begin{proof}
    Since $\mathcal{O}(B^\op) \cong \mathcal{O}(B)$ and $\mathcal{C}(B^\op) = \mathcal{C}(B)$, we have $S_{\mathcal{O}(B^\op)}(x) \le S_{\mathcal{C}(B^\op)}(x)$.
    Thus \Cref{abcd} applies to posets $A,B$ and also $B^\op$.
    By \Cref{S_poly_order_chain} we have 
    \begin{align*}
        S_{\mathcal{O}(A<B)}(x) &= \frac{1}{x} S^0_{\mathcal{O}(A)}(x)S^0_{\mathcal{O}(B^\op)}(x) + S^1_{\mathcal{O}(A)}(x)S^1_{\mathcal{O}(B^\op)}(x) \\
        &= x \tilde \beta_A(x) \tilde \beta_{B^\op}(x) + \tilde \gamma_A(x)  \gamma_{B^\op}(x)
    \end{align*}
    and
    \begin{align*}
        S_{\mathcal{C}(A<B)}(x) &= \frac{1}{x} S^0_{\mathcal{C}(A)}(x)S^0_{\mathcal{C}(B)}(x) + S^1_{\mathcal{C}(A)}(x)S^1_{\mathcal{C}(B)}(x) \\
        &= x \tilde \alpha_A(x) \tilde \alpha_B(x) + \tilde \delta_A(x) \tilde \delta_B(x).
    \end{align*}
    Showing $S_{\mathcal{O}(A<B)}(x) \le S_{\mathcal{C}(A<B)}(x)$ is therefore equivalent to showing
    \begin{equation}
        x \tilde \beta_A(x) \tilde \beta_{B^\op}(x) - x \tilde \alpha_A(x) \tilde \alpha_B(x) \le \tilde \delta_A(x) \tilde \delta_B(x) - \tilde \gamma_A(x) \tilde \gamma_{B^\op}(x). \label{eq:1}
    \end{equation}
    The left-hand side in \eqref{eq:1} can be rewritten as
    \begin{equation}
        x \tilde \alpha_A(x) \Big(\tilde \beta_{B^\op}(x) - \tilde \alpha_B(x) \Big) + x \Big( \tilde \beta_A(x) - \tilde \alpha_A(x)\Big) \tilde \beta_{B^\op}(x) \label{eq:2}
    \end{equation}
    and the right-hand side in \eqref{eq:1} can be rewritten as 
    \begin{equation}
        \tilde \gamma_A(x) \Big( \tilde \delta_B(x) - \tilde \gamma_{B^\op}(x) \Big) + \Big( \tilde \delta_A(x) - \tilde \gamma_A (x)\Big) \tilde \delta_B(x). \label{eq:3}
    \end{equation}
    Our aim is thus to show $\eqref{eq:2} \le \eqref{eq:3}$.
    
    By applying part (1) of \Cref{abcd} to $B^\op$ and part (2) to $A$ we obtain
    \begin{equation*}
        \begin{cases}
            0 \le x (\tilde \beta_{B^\op}(x) - \tilde \alpha_{B^\op}(x)) \le \tilde \delta_{B^\op}(x) - \tilde \gamma_{B^{op}}(x) \\
            0 \le \tilde \alpha_A(x) \le \tilde \gamma_A(x).
        \end{cases}
    \end{equation*}
    Multiplying both sides together here gives us 
    \begin{equation}
        x \tilde \alpha(x) \Big( \tilde \beta_{B^\op}(x) - \tilde \alpha_{B^\op}(x) \Big) \le \tilde \gamma_A(x) \Big( \tilde \delta_{B^\op}(x) - \tilde \gamma_{B^\op}(x) \Big). \label{eq:4}
    \end{equation}
    Applying part (1) of \Cref{abcd} to $A$ and part (2) to $B^\op$ we get
    \begin{equation*}
        \begin{cases}
            0 \le x (\tilde \beta_A(x) - \tilde \alpha_A(x)) \le \tilde \delta_A(x) - \tilde \gamma_A(x) \\
            0 \le \tilde \beta_{B^\op}(x) \le \tilde \delta_{B^\op}(x).
        \end{cases}
    \end{equation*}
    Multiplying both sides together gives us
    \begin{equation}
        x \tilde \beta_{B^\op}(x) \Big( \tilde \beta_A(x) - \tilde \alpha_A(x) \Big) \le \tilde \delta_{B^\op}(x) \Big( \tilde \delta_A(x) - \tilde \gamma_A(x) \Big) \label{eq:5}.
    \end{equation}
    Recall that $\tilde \alpha_{B^\op}(x) = \tilde \alpha_B(x)$ and $\tilde \delta_{B^\op}(x) = \tilde \delta_B(x)$.
    Adding inequalities \eqref{eq:4} and \eqref{eq:5} together thus gives us $\eqref{eq:2} \le \eqref{eq:3}$.
    This finishes the proof.
\end{proof}

\section{Main result and conclusion}\label{section:main}
From  Propositions~\ref{cartesian_product_ineq} and \ref{orderedsum_ineq}, our main result follows immediately. 
\begin{theorem}\label{main_theorem}
    Let $\mathcal{F}$ be the family of posets built by starting with $X$-free posets and using disjoint unions and ordinal sums.
    Then any poset $P \in \mathcal{F}$ satisfies
    \begin{equation*}
        s_k(\mathcal{O}(P)) \le s_k(\mathcal{C}(P))
    \end{equation*}
    for all $k \ge 0$.
\end{theorem}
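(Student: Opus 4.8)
The plan is to prove the theorem by structural induction on how the poset $P \in \mathcal{F}$ is built up from $\mathbf{X}$-free posets using the two operations (disjoint union and ordinal sum). The statement I want to establish for every $P \in \mathcal{F}$ is the polynomial inequality $S_{\mathcal{O}(P)}(x) \le S_{\mathcal{C}(P)}(x)$, which by the definition of the partial order on $\mathbb{Z}_{\ge 0}[x]$ is exactly equivalent to $s_k(\mathcal{O}(P)) \le s_k(\mathcal{C}(P))$ for all $k \ge 0$. Phrasing the induction at the level of the polynomials $S_{\mathcal{P}}(x)$ rather than the individual coefficients is what makes the closure under the two operations clean, since the building blocks I have already established (\Cref{cartesian_product_ineq} and \Cref{orderedsum_ineq}) are both stated as polynomial inequalities.

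\medskip

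\noindent I would carry out the induction as follows. For the \textbf{base case}, I need the inequality $S_{\mathcal{O}(P)}(x) \le S_{\mathcal{C}(P)}(x)$ whenever $P$ is $\mathbf{X}$-free. This is immediate: by the result of Hibi and Li cited in the introduction, an $\mathbf{X}$-free poset $P$ has $\mathcal{O}(P)$ and $\mathcal{C}(P)$ unimodularly equivalent, so in particular the two polytopes are combinatorially isomorphic and have identical numbers of simplex faces in every dimension. Hence $S_{\mathcal{O}(P)}(x) = S_{\mathcal{C}(P)}(x)$, which certainly gives $\le$. For the \textbf{inductive step}, any $P \in \mathcal{F}$ that is not a base case arises either as a disjoint union $P = A \sqcup B$ or as an ordinal sum $P = A < B$, where $A$ and $B$ are strictly smaller members of $\mathcal{F}$ (built using fewer operations). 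By the inductive hypothesis, both $A$ and $B$ satisfy the hypotheses $S_{\mathcal{O}(A)}(x) \le S_{\mathcal{C}(A)}(x)$ and $S_{\mathcal{O}(B)}(x) \le S_{\mathcal{C}(B)}(x)$. In the disjoint-union case I invoke \Cref{cartesian_product_ineq} directly to conclude $S_{\mathcal{O}(A \sqcup B)}(x) \le S_{\mathcal{C}(A \sqcup B)}(x)$; in the ordinal-sum case I invoke \Cref{orderedsum_ineq} to conclude $S_{\mathcal{O}(A < B)}(x) \le S_{\mathcal{C}(A < B)}(x)$. Either way the desired inequality propagates to $P$, closing the induction.

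\medskip

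\noindent Because the two propositions have already absorbed all the real content, there is essentially \emph{no remaining obstacle}: the entire theorem reduces to observing that the class $\mathcal{F}$ is, by definition, the closure of the $\mathbf{X}$-free posets under exactly the two operations for which the inequality has been shown to be preserved. The only point that deserves a sentence of care is the well-foundedness of the induction, i.e. making precise that every $P \in \mathcal{F}$ admits a finite construction tree whose leaves are $\mathbf{X}$-free posets and whose internal nodes are disjoint unions or ordinal sums, so that induction on the size (number of nodes, or equivalently $|P|$) of such a tree terminates. This is exactly the meaning of ``built by starting with $\mathbf{X}$-free posets and using disjoint unions and ordinal sums,'' so no separate argument is needed beyond recording it. Thus the theorem follows immediately from Propositions~\ref{cartesian_product_ineq} and~\ref{orderedsum_ineq} together with the $\mathbf{X}$-free base case.
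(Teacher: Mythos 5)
Your proposal is correct and matches the paper's own proof: both argue by structural induction on the construction of $P \in \mathcal{F}$, using the Hibi--Li unimodular equivalence for the $\mathbf{X}$-free base case and Propositions~\ref{cartesian_product_ineq} and~\ref{orderedsum_ineq} for the disjoint-union and ordinal-sum steps respectively. The only difference is presentational — you spell out the well-foundedness of the induction, which the paper leaves implicit.
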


\begin{proof}
    If $P$ is $X$-free then $\mathcal{O}(P)$ and $\mathcal{C}(P)$ are unimodularly equivalent \cite{Hibi&Li}, so we have an equality $s_k(\mathcal{O}(P)) = s_k(\mathcal{C}(P))$ for all $k$.
    
    Then suppose $A$ and $B$ are posets in $\mathcal{F}$ that satisfy $s_k(\mathcal{O}(A)) \le s_k(\mathcal{C}(A))$ and $s_k(\mathcal{O}(B)) \le s_k(\mathcal{C}(B))$ for all $k$.
    Now for all $k$ we have $s_k(\mathcal{O}(A \sqcup B)) \le s_k(\mathcal{C}(A \sqcup B))$ by \Cref{cartesian_product_ineq} and $s_k(\mathcal{O}(A < B)) \le s_k(\mathcal{C}(A  < B))$ by \Cref{orderedsum_ineq}.
    Hence every poset in the family $\mathcal{F}$ satisfies the claimed inequality.
\end{proof}

\begin{corollary}
    Let $P$ be in the family $\mathcal{F}$ defined in \Cref{main_theorem}.
    Then $S_{\mathcal{O}(P)}(x) = S_{\mathcal{C}(P)}(x)$ if and only if $P$ is $X$-free.
\end{corollary}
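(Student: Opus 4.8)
The plan is to prove the two directions separately, leaning on the coefficient-wise inequality $S_{\mathcal{O}(P)}(x) \le S_{\mathcal{C}(P)}(x)$ already established in \Cref{main_theorem}. The direction ``$\mathbf{X}$-free $\Rightarrow$ equality'' is immediate and is in fact already used inside the proof of \Cref{main_theorem}: if $P$ is $\mathbf{X}$-free then $\mathcal{O}(P)$ and $\mathcal{C}(P)$ are unimodularly equivalent by \cite{Hibi&Li}, hence combinatorially isomorphic, so $s_k(\mathcal{O}(P)) = s_k(\mathcal{C}(P))$ for every $k$ and thus $S_{\mathcal{O}(P)}(x) = S_{\mathcal{C}(P)}(x)$. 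The substance of the corollary is the converse, which by \Cref{main_theorem} reduces to the following claim: if $P \in \mathcal{F}$ is not $\mathbf{X}$-free, then $s_k(\mathcal{O}(P)) < s_k(\mathcal{C}(P))$ for at least one value of $k$.

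First I would obtain this strict inequality from the triangular-face count. The result of \cite{two_dim_faces} states that for every finite poset $P$ one has $s_2(\mathcal{O}(P)) \le s_2(\mathcal{C}(P))$, with equality if and only if $P$ is $\mathbf{X}$-free. Hence if $P$ is not $\mathbf{X}$-free then $s_2(\mathcal{O}(P)) < s_2(\mathcal{C}(P))$, so the coefficients of $x^3$ in $S_{\mathcal{O}(P)}(x)$ and $S_{\mathcal{C}(P)}(x)$ differ and the two polynomials are not equal. Combined with the easy direction this finishes the proof; note that this argument does not even use $P \in \mathcal{F}$, that hypothesis being inherited only from the ambient \Cref{main_theorem}.

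If instead one wants an argument self-contained within this paper, I would induct on the recursive construction of $P \in \mathcal{F}$, mirroring the proof of \Cref{main_theorem}. The disjoint-union step is routine: using \Cref{cartesian_prod_simplicies} together with the fact that $s_0(\mathcal{O}(A)) = s_0(\mathcal{C}(A))$ for every poset (filters and antichains being equinumerous), one checks that the nonnegative difference $s_k(\mathcal{C}(A\sqcup B))(x) - s_k(\mathcal{O}(A\sqcup B))$ vanishes for all $k$ exactly when equality holds for both $A$ and $B$, and since $\mathbf{X}$ is connected, $A \sqcup B$ is $\mathbf{X}$-free exactly when both $A$ and $B$ are. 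The main obstacle is the ordinal-sum step. Here the induction hypothesis does not transfer cleanly, because $A < B$ can fail to be $\mathbf{X}$-free even when both $A$ and $B$ are: the new copies of $\mathbf{X}$ are precisely its two nontrivial ordinal decompositions, namely a two-element antichain placed below the poset (point $<$ two-element antichain), and the poset (two-element antichain $<$ point) placed below a two-element antichain. These appear whenever $A$ is not a chain and $B$ contains a point lying below an incomparable pair, or the dual configuration. To carry this out I would revisit the inequality chain \eqref{eq:4}--\eqref{eq:5} in the proof of \Cref{orderedsum_ineq} and determine exactly when each of those coefficient-wise products of nonnegative polynomials is tight, express the condition in terms of $\tilde\alpha_P, \tilde\beta_P, \tilde\gamma_P, \tilde\delta_P$, and match it against the combinatorial characterization of $\mathbf{X}$-freeness of $A < B$ just described. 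This bookkeeping—pinning down when a product $pr \le qs$ of nonnegative polynomials is an equality, and translating each resulting alternative into a statement about whether $A$ and $B$ are chains and whether they contain a point below an incomparable pair (or its dual)—is where essentially all the work lies, which is why the short route through \cite{two_dim_faces} is preferable.
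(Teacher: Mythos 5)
Your main argument is exactly the paper's proof: the easy direction follows from unimodular equivalence of $\mathcal{O}(P)$ and $\mathcal{C}(P)$ for $\mathbf{X}$-free posets, and the converse follows from the equality characterization $s_2(\mathcal{O}(P)) = s_2(\mathcal{C}(P)) \iff P$ is $\mathbf{X}$-free from \cite{two_dim_faces}, which as you correctly note does not even require $P \in \mathcal{F}$. The additional self-contained inductive sketch is not needed (and is left incomplete), but the primary argument stands on its own and matches the paper.
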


\begin{proof}
    If $P$ is $X$-free then $\mathcal{O}(P)$ and $\mathcal{C}(P)$ are unimodularly equivalent, and thus $S_{\mathcal{O}(P)}(x) = S_{\mathcal{C}(P)}(x)$.
    If we have an equality $S_{\mathcal{O}(P)}(x) = S_{\mathcal{C}(P)}(x)$ then in particular $s_2(\mathcal{O}(P)) = s_2(\mathcal{C}(P))$ and thus by \cite[Theorem 4.1]{two_dim_faces} $P$ has to be $X$-free.
\end{proof}

\Cref{main_theorem} generalizes the main result in \cite{mori_maximal_ranked}, where it is shown that $s_2(\mathcal{O}(P)) \le s_2(\mathcal{C}(P))$ holds for all so called maximal ranked posets $P$, which are posets built by taking ordinal sums of antichains.
Such posets are clearly contained in our family $\mathcal{F}$.
On the other hand, in \cite{two_dim_faces} it shown that $s_2(\mathcal{O}(P)) \le s_2(\mathcal{C}(P))$ holds for all posets $P$.
\Cref{main_theorem} generalizes this result to arbitrary simplex faces, but restricting to a smaller class of posets.
In \cite{f-vector_inequalities}, it is proved that $f_k(\mathcal{O}(P)) \le f_k(\mathcal{C}(P))$ holds for any poset $P$ in the family $\mathcal{F}$.
\Cref{main_theorem} proves a variation of this result by restricting to simplex faces.

It seems like a natural question to ask for a formula for the largest dimension of a simplex face 
in $\mathcal{O}(P)$ and $\mathcal{C}(P)$. Since every maximal chain in $P$ gives a simplex in $\mathcal{C}(P)$ by \Cref{mori_theorem}, \Cref{cp_le_op} shows that both polytopes have simplices of dimension equal to the height of $P$. Therefore, the maximal dimensions of simplices are bounded from below by, but not necessarily equal to, the height of the poset. Finding a formula for the largest simplex even for special classes of posets, such as series-parallel posets, appears to be an open problem. Note that, by the results in this paper, the largest dimension of a simplex in $\mathcal{C}(P)$ is at least that of a simplex in $\mathcal{O}(P)$ if $P$ belongs to the recursively defined family $\mathcal{F}$ of posets.

\printbibliography

\end{document}